\documentclass[12pt]{article}

\usepackage{mathrsfs}
\usepackage{amssymb}
\usepackage{amsmath}
\usepackage{amsfonts}
\usepackage{amstext}
\usepackage{amsthm}
\usepackage{graphicx}
\usepackage{subfig}
\usepackage{color}
\usepackage{indentfirst,latexsym,bm}
\usepackage{mathrsfs}
\usepackage{cite}
\usepackage[all]{xy}
\usepackage{newlfont}

\usepackage{enumerate}
\numberwithin{equation}{section}

\newtheorem{theorem}{Theorem}[section]

\newtheorem{lemma}[theorem]{Lemma}

\numberwithin{equation}{section}

\begin{document}

\title{Nontrivial domains for the Bernoulli's overdetermined problem with small constant vorticity
\thanks{
The research was supported by National Natural Science Foundation of
China (No. 12301133), the Postdoctoral Science Foundation of China (No. 2023M741441, No. 2024T170353) and Jiangsu Education Department (No. 23KJB110007). }}
\author{
Yong Zhang, Jialong Zhu
\thanks{Yong Zhang. School of Mathematical Sciences, Jiangsu University, Zhenjiang, 212013, P.R. China. E-mail: zhangyong@ujs.edu.cn}
}

\date{}
\maketitle

\renewcommand{\abstractname}{Abstract}

\begin{abstract}
In this paper, we first apply the implicit function theorem to find nontrivial solutions to the following Bernoulli's overdetermined problem
\begin{equation}
-\Delta u=\gamma\,\, \text{in}\,\,\Omega \setminus\overline{D}, \,\, u=1,\,\,\partial_\mathbf{n} u=Q\,\,\text{on}\,\,\partial D, \,\,u=0\,\,\text{on}\,\,\partial\Omega \nonumber
\end{equation}
for a fixed boundary $\partial\Omega$ and a free boundary $\partial D$ in $\mathbb{R}^{N}$ with $N\geq2$, where $\gamma$ is a sufficiently small constant, the Bernoulli value $Q$ is determined by the construction, and $\Omega \setminus\overline{D}$ is a bounded ring-shaped domain. In dimension $N=n+1$ with $n\geq2$, we further study the existence of a nontrivial unbounded domain $\Omega \setminus\overline{D}$ in $\mathbb{R}^N$ such that the Bernoulli's overdetermined problem admits a solution. This unbounded domain we construct is periodic in one variable and radial in the other variables by bifurcation analysis.
\end{abstract}

\emph{Keywords:} Bernoulli's overdetermined problem; free boundary; nontrivial domains
\\

\emph{AMS Subjection Classification(2010):} 35B32; 35N05; 51M10


\section{Introduction and main results}

For the bounded problem, assume that $\Omega\subset\mathbb{R}^N$, $N\geq2$, is a bounded domain and let $D\Subset\Omega$ be a subdomain with smooth boundary. We consider the following Bernoulli free boundary problem with fixed constant $\gamma$:
\begin{equation}\label{e1.1}
\left\{
\begin{array}{ll}
-\Delta u=\gamma &\text{in }\Omega\setminus\overline D,\\
u=0 &\text{on }\partial\Omega,\\
u=1 &\text{on }\partial D,\\
\dfrac{\partial u}{\partial\mathbf n}=Q &\text{on }\partial D.
\end{array}
\right.
\end{equation}
Here $\mathbf n$ is the outer unit normal to $\Omega\setminus\overline D$ and $Q>0$ is constant along the inner boundary. In our constructions, $Q$ is an output of the problem. For a given $\Omega$, we seek a nearby inner domain $D$ and a constant $Q$ for which \eqref{e1.1} is solvable. The case $\gamma=0$ is the classical irrotational Bernoulli problem. In the plane, $u$ is the stream function of a stationary incompressible inviscid flow with constant vorticity circulating around a bubble $D$ in a container $\Omega$. Since $u$ is constant on $\partial D$ and $Q>0$, the additional condition gives
\[
\partial_{\mathbf n}u=\nabla u\cdot\mathbf n=|\nabla u|=Q,
\]
so the kinetic energy is constant along $\partial D$.

For $\gamma=0$, the symmetry results of Alessandrini \cite{Alessandrini} and Reichel \cite{Reichel} imply that, if $\Omega=B_1$, then $D$ is a concentric ball. For constant $\gamma$, we use the same reference pair
\[
(D_0,\Omega_0)=(B_R,B_1),\qquad 0<R<1.
\]
The radial solution on this annulus is
\begin{equation}\label{e1.2}
u_R(r)=
\begin{cases}
\dfrac{\gamma}{2N}(1-r^2)+a_R(r^{2-N}-1),&N\geq3,\\[1ex]
\dfrac{\gamma}{4}(1-r^2)+a_R\log r,&N=2,
\end{cases}
\quad
a_R=
\begin{cases}
\dfrac{1+\frac{\gamma}{2N}(R^2-1)}{R^{2-N}-1},&N\geq3,\\[1ex]
\dfrac{1+\frac{\gamma}{4}(R^2-1)}{\log R},&N=2.
\end{cases}
\end{equation}
We set
\begin{equation}\label{e1.3}
Q_R^{\mathrm{bd}}:=-u_R'(R),\qquad
\kappa_R^{\mathrm{bd}}:=u_R''(R)
=\frac{N-1}{R}Q_R^{\mathrm{bd}}-\gamma.
\end{equation}
For each fixed $R$, $Q_R^{\mathrm{bd}}>0$ if $|\gamma|$ is sufficiently small. When $\gamma=0$, \eqref{e1.2} reduces to the usual harmonic annular profile, with
\[
Q_R^{\mathrm{bd}}=
\begin{cases}
\dfrac{(N-2)R^{1-N}}{R^{2-N}-1},&N\geq3,\\[1ex]
-\dfrac{1}{R\log R},&N=2.
\end{cases}
\]

Fix $\alpha\in(0,1)$. Now we suppose that $(D_f,\Omega_g)$ is the perturbed domains of $(D_0,\Omega_0)$, where $f\in \mathcal{F}$ and $g\in \mathcal{G}$ with
\begin{equation}\label{e1.4}
\mathcal{F}=\{f\in C^{2,\alpha}(\partial D_0) : \int_{\partial D_0}f=0\} \quad \text{and} \quad \mathcal{G}=\{g\in C^{2,\alpha}(\partial \Omega_0) : \int_{\partial \Omega_0}g=0\}.
\end{equation}
If the functions $f$ and $g$ are sufficiently small in $C^{2,\alpha}$, the perturbed domains $D_f$ and $\Omega_g$ are well defined as the unique bounded domains whose boundaries are
\begin{equation}\label{e1.5}
\partial D_f=\{x+f(x)\mathbf{n} : x\in \partial D_0\} \quad  \text{and} \quad \partial \Omega_g=\{x+g(x)\mathbf{n} : x\in \partial \Omega_0\}.
\end{equation}
On $\partial D_0$, it is easy to check that $\mathbf n=-x/R$, while on $\partial\Omega_0$, $\mathbf n=x$. Thus a positive $f$ moves the inner boundary inward: $\partial D_f=\{(R-f(R\theta))\theta:\theta\in\mathbb S^{N-1}\}$. We retain this normal-displacement convention throughout the bounded case.

In this paper, we are mainly concerned with the following two issues. One is to construct an inner boundary for each sufficiently small perturbation $\Omega_g$ of $B_1$. If $\Omega_g$ is not a ball, the admitting domain $\Omega_g\setminus\overline{D_f}$ is not a standard annulus. The so-called {\bf admitting domain} means that the region which makes problem (\ref{e1.1}) solvable. Of course, this nontrivial domain $\Omega_g \setminus\overline{D_f}$ is bounded. The other aim is to find some new unbounded domains admitting a solution of (\ref{e1.1}). The unbounded domains we will construct are periodic in one variable and radial in the other variables, and they bifurcate from a straight hollow cylinder.
Our main results can be concluded as follows:
\begin{theorem}\label{thm1.1}
Fix $N\geq2$, $R\in(0,1)$ and $\alpha\in(0,1)$. There exists $\gamma_{\mathrm{bd}}(R,N)>0$ such that, for every fixed $|\gamma|<\gamma_{\mathrm{bd}}(R,N)$ and every sufficiently small $g\in\mathcal G$, there is a unique small $f=f(g)\in\mathcal F$ for which $\Omega_g\setminus\overline{D_f}$ admits a solution of \eqref{e1.1} for some constant $Q=Q(g)>0$. The map $g\mapsto f(g)$ is smooth, and uniqueness is understood in a neighborhood of $(0,0)\in\mathcal F\times\mathcal G$. Moreover, $f(0)=0$, $Q(0)=Q_R^{\mathrm{bd}}$, and the reference solution is given by \eqref{e1.2}.

\begin{figure}[ht]
\centering
\includegraphics[width=0.7\textwidth]{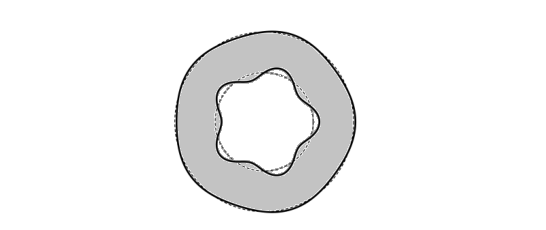}
\caption{The nontrivial bounded domain $\Omega_g \setminus\overline{D_f}$.}
\end{figure}
\end{theorem}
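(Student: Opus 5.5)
We argue by the implicit function theorem applied to a Dirichlet-to-Neumann type operator. For small $(f,g)\in\mathcal F\times\mathcal G$, let $u_{f,g}$ be the unique solution of $-\Delta u=\gamma$ in $\Omega_g\setminus\overline{D_f}$, $u=0$ on $\partial\Omega_g$, $u=1$ on $\partial D_f$. It exists by Schauder theory and lies in $C^{2,\alpha}$ up to the boundary. We then define
\[
\Phi(f,g)(x):=\partial_{\mathbf n}u_{f,g}\bigl(x+f(x)\mathbf n\bigr)-\frac{1}{|\partial D_0|}\int_{\partial D_0}\partial_{\mathbf n}u_{f,g}\bigl(y+f(y)\mathbf n\bigr)\,dy,\qquad x\in\partial D_0 .
\]
This is a map from a neighborhood of $(0,0)$ in $\mathcal F\times\mathcal G$ into $\mathcal H:=\{h\in C^{1,\alpha}(\partial D_0):\int_{\partial D_0}h=0\}$. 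Solutions of \eqref{e1.1} correspond exactly to zeros of $\Phi$, with $Q$ equal to the subtracted mean. We have $\Phi(0,0)=0$ since $u_{0,0}=u_R$. Smoothness of $\Phi$ is standard: pull the problem back to the fixed annulus $B_1\setminus\overline{B_R}$ by a diffeomorphism $x\mapsto x+\chi_1(x)\tilde f(x)+\chi_2(x)\tilde g(x)$, where $\tilde f,\tilde g$ are extensions of $f\mathbf n$, $g\mathbf n$ and $\chi_i$ are cutoffs near each boundary component. The equation becomes an elliptic operator whose coefficients depend smoothly (polynomially or analytically) on $(f,g)$ in $C^{2,\alpha}$, and the implicit function theorem in $C^{2,\alpha}$ gives smooth dependence.

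Next we compute the linearization $L:=D_f\Phi(0,0)$. By the shape-derivative formula, $u'=\partial_f u$ solves $\Delta u'=0$ in the annulus, with $u'=0$ on $\partial B_1$ and $u'=-\partial_{\mathbf n}u_R\,f=-Q_R^{\mathrm{bd}}f$ on $\partial B_R$ (we must take care with the sign convention $\mathbf n=-x/R$). Then
\[
L f=\partial_{\mathbf n}u'+\partial_{\mathbf n}^2u_R\, f - \text{mean},
\]
where $\partial_{\mathbf n}^2 u_R=u_R''(R)=\kappa_R^{\mathrm{bd}}$. We expand $f=\sum_{k\ge1}f_k$ in spherical harmonics of degree $k$; the $k=0$ mode is excluded by $\int f=0$. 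The harmonic function with boundary values $Y_k$ on $r=R$ and $0$ on $r=1$ is
\[
\frac{r^k-r^{2-N-k}}{R^k-R^{2-N-k}}\,Y_k .
\]
From this we get $L f_k=\sigma_k f_k$ with
\[
\sigma_k=Q_R^{\mathrm{bd}}\,\frac{kR^{k-1}+(N-2+k)R^{1-N-k}}{R^{k}-R^{2-N-k}}\cdot(\pm1)+\kappa_R^{\mathrm{bd}} ,
\]
up to the orientation signs, which we will fix carefully. Since $\partial_{\mathbf n}u'$ carries the symbol of a first order Dirichlet-to-Neumann operator, $|\sigma_k|\sim Q_R^{\mathrm{bd}}k/R$ as $k\to\infty$. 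Hence $L:\mathcal F\to\mathcal H$ is an isomorphism provided $\sigma_k\neq0$ for all $k\ge1$; the gain of one derivative in Hölder spaces follows from standard DN-operator/Schauder estimates.

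The main obstacle is verifying $\sigma_k\ne0$ for every $k\ge1$, and this is where the smallness of $\gamma$ enters. At $\gamma=0$ the relevant sign analysis reduces to the classical harmonic case. We will check, using the explicit $Q_R^{\mathrm{bd}}$ and $\kappa_R^{\mathrm{bd}}=\frac{N-1}{R}Q_R^{\mathrm{bd}}$, that $\sigma_k$ has a fixed sign and satisfies $|\sigma_k|\ge c(R,N)\,k>0$ for all $k\ge1$. Heuristically, the DN term grows like $(k+N-2)/R$ and strictly dominates $(N-1)/R$ for $k\ge2$, while $k=1$ needs the explicit computation with $R<1$. It is not translation-degenerate, because the outer boundary is fixed. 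For general $\gamma$, both $Q_R^{\mathrm{bd}}$ and $\kappa_R^{\mathrm{bd}}$ depend continuously on $\gamma$ and the DN factor is $\gamma$-independent. Because $\sigma_k$ depends linearly on these quantities with uniform-in-$k$ bounds, we get $\sigma_k(\gamma)=\sigma_k(0)+O(|\gamma|)(1+k)$. This yields $\gamma_{\mathrm{bd}}(R,N)$ with $|\sigma_k|\ge \tfrac c2 k$ for $|\gamma|<\gamma_{\mathrm{bd}}$; we shrink $\gamma_{\mathrm{bd}}$ further so that $Q_R^{\mathrm{bd}}>0$.

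Finally, the implicit function theorem gives a neighborhood of $g=0$ and a unique smooth $f(g)$ with $\Phi(f(g),g)=0$ and $f(0)=0$. We set $Q(g)$ equal to the mean of $\partial_{\mathbf n}u_{f(g),g}$ on $\partial D_{f(g)}$. It is smooth in $g$, satisfies $Q(0)=Q_R^{\mathrm{bd}}$, and hence $Q(g)>0$ for small $g$.
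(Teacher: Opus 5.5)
Your proposal is correct and is essentially the paper's proof. It uses the same mean-subtracted pulled-back Neumann map, the same shape derivative with inner datum $-Q_R^{\mathrm{bd}}f$, and the same spherical-harmonic multipliers, followed by the same perturbation in $\gamma$ and implicit function theorem. With the orientation fixed ($\mathbf n=-\partial_r$ on $\partial D_0$), the multipliers come out as $\sigma_k=-Q_R^{\mathrm{bd}}\delta_k(R)-\gamma$ with $\delta_k(R)>0$. This confirms your heuristic that the Dirichlet-to-Neumann term dominates $(N-1)/R$, with $k=1$ handled by $\delta_1=NR^{N-1}/(1-R^N)$.

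The one step you leave schematic is invertibility between the H\"older spaces. The paper closes it by writing $D_f\Psi(0,0)=-Q_R^{\mathrm{bd}}T+\kappa_R^{\mathrm{bd}}J$, where $T:\mathcal F\to\mathcal H$ is the Dirichlet-to-Neumann isomorphism (from mixed-problem Schauder estimates) and $J$ is the compact inclusion $C^{2,\alpha}\hookrightarrow C^{1,\alpha}$. Fredholm index zero plus injectivity from $\sigma_k\neq0$ then gives the inverse. That argument, not the growth $|\sigma_k|\gtrsim k$, is what your appeal to ``standard DN/Schauder estimates'' needs to supply.
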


We next consider a fixed outer cylinder and a free inner boundary. Put $N=n+1$ with $n\geq2$, and define
\[
\Omega_1^t=\{(x,t)\in\mathbb R^n\times\mathbb R:|x|<1\},\qquad
D_\phi^t=\{(x,t):|x|<\phi(t)\},
\]
where $\phi(t)=R+\varphi(t)$ is even and $2\pi$-periodic, with $0<\phi(t)<1$ for every $t$. The perturbation $\varphi$ can have either sign. Let $U_R$ be the unique radial solution of
\[
-U_R''-\frac{n-1}{r}U_R'=\gamma\quad(R<r<1),\qquad
U_R(R)=1,\quad U_R(1)=0,
\]
and set
\[
Q_R^{\mathrm{cyl}}=-U_R'(R),\qquad
\kappa_R^{\mathrm{cyl}}=U_R''(R)=\frac{n-1}{R}Q_R^{\mathrm{cyl}}-\gamma.
\]
The explicit formula for $U_R$ is given in \eqref{eq3.6}. The cross-sectional dimension here is $n$, rather than the ambient dimension $N$.

\begin{theorem}\label{thm1.2}
For each $n\geq2$ and $\alpha\in(0,1)$, there exists $\gamma_{\mathrm{cyl}}(n)>0$ such that, for every fixed $|\gamma|<\gamma_{\mathrm{cyl}}(n)$, there are $R_*=R_*(n,\gamma)\in(0,1)$ and a smooth map
\[
(-\varepsilon,\varepsilon)\longrightarrow
(0,1)\times C^{2,\alpha}_{p,e}(\mathbb R),\qquad
s\longmapsto(R_s,\varphi_s),
\]
where $C^{2,\alpha}_{p,e}$ denotes the even $2\pi$-periodic H\"older space, such that $R_0=R_*$, $\varphi_0=0$, and $0<\phi_s:=R_s+\varphi_s<1$. For every $s$, there is an even, $2\pi$-periodic solution $u_s\in C^{2,\alpha}(\overline{\Omega_1^t\setminus\overline{D_{\phi_s}^t}})$ of
\begin{equation}\label{e1.6}
\left\{
\begin{array}{ll}
-\Delta u_s=\gamma &\text{in }\Omega_1^t\setminus\overline{D_{\phi_s}^t},\\
u_s=0 &\text{on }\partial\Omega_1^t,\\
u_s=1 &\text{on }\partial D_{\phi_s}^t,\\
\partial_{\mathbf n}u_s=Q_{R_s}^{\mathrm{cyl}}>0
&\text{on }\partial D_{\phi_s}^t.
\end{array}
\right.
\end{equation}
in the domain
\begin{equation}\label{e1.7}
\Omega_1^t\setminus\overline{D_{\phi_s}^t}
=\{(x,t)\in\mathbb R^n\times\mathbb R:\phi_s(t)<|x|<1\}.
\end{equation}
Moreover,
\[
\varphi_s(t)=s\cos t+s\mu_s(t),\qquad
\int_0^{2\pi}\mu_s(t)\cos t\,dt=0,\qquad \mu_0=0,
\]
where $s\mapsto\mu_s\in C^{2,\alpha}_{p,e}(\mathbb R)$ is smooth. In particular, $\phi_s$ is nonconstant for $s\ne0$. No zero-average condition is imposed on $\varphi_s$.

\begin{figure}[ht]
\centering
\includegraphics[width=0.7\textwidth]{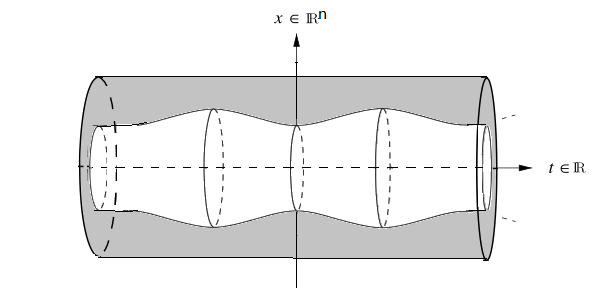}
\caption{The nontrivial unbounded domain $\Omega^{t}_{1}\setminus \overline{D^{t}_{\phi_s}}$.}
\end{figure}
\end{theorem}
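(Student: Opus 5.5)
We will prove Theorem~\ref{thm1.2} with the Crandall--Rabinowitz theorem on bifurcation from a simple eigenvalue, using $R$ as the bifurcation parameter.

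\textbf{Reformulation as a one-dimensional operator equation.} Fix $|\gamma|$ small. For $R\in(0,1)$ and $\varphi\in C^{2,\alpha}_{p,e}(\mathbb R)$ small, put $\phi=R+\varphi$. Let $u_{R,\varphi}$ be the unique even, $2\pi$-periodic solution of the Dirichlet problem $-\Delta u=\gamma$ in $\{\phi(t)<|x|<1\}$ with $u=1$ on $|x|=\phi(t)$ and $u=0$ on $|x|=1$. This solution is radial in $x$.

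To work in a fixed domain, we pull back by the diffeomorphism
\[
(r,t)\mapsto\Bigl(r+\chi(r)\varphi(t),\,t\Bigr),
\]
where $\chi$ is a smooth cutoff with $\chi=1$ near $r=R$ and $\chi=0$ near $r=1$. The pulled-back problem lives on the fixed strip $(R,1)\times\mathbb R/2\pi\mathbb Z$ in the variables $(r,t)$. Schauder theory together with the implicit function theorem shows that the map $(R,\varphi)\mapsto u_{R,\varphi}$ is smooth into $C^{2,\alpha}$.

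We then define
\[
F(R,\varphi)(t)=\partial_{\mathbf n}u_{R,\varphi}\bigl|_{(\phi(t)\theta,\,t)}-Q^{\mathrm{cyl}}_R ,
\]
which maps $(0,1)\times U\subset\mathbb R\times C^{2,\alpha}_{p,e}$ into $C^{1,\alpha}_{p,e}$. By construction $F(R,0)=0$ for every $R$. Solutions of \eqref{e1.6} are exactly the zeros of $F$.

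\textbf{Linearization and the symbol.} Write $\varphi=\sum_k a_k\cos kt$. The derivative $L_R:=D_\varphi F(R,0)$ is a Fourier multiplier. To compute it, take $v$ solving
\[
\Delta v=0 \text{ on the annular cylinder},\qquad v=0 \text{ on } |x|=1,\qquad v=-U_R'(R)\varphi=Q_R^{\mathrm{cyl}}\varphi \text{ on } |x|=R.
\]
This boundary value comes from linearizing the condition $u=1$ on the moving boundary. Then
\[
L_R\varphi=-v_r(R)-U_R''(R)\varphi ,
\]
with signs to be fixed carefully. For the mode $\cos kt$ we have $v=c\,w_k(r)\cos kt$, where $w_k$ solves
\[
w''+\frac{n-1}{r}w'-k^2w=0,\qquad w(R)=1,\qquad w(1)=0 .
\]
This is a combination of modified Bessel functions $r^{1-n/2}I_{n/2-1}(kr)$ and $r^{1-n/2}K_{n/2-1}(kr)$. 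The symbol is therefore
\[
\sigma_k(R)=-Q_R^{\mathrm{cyl}}\,w_k'(R)-\kappa_R^{\mathrm{cyl}},\qquad \text{up to sign}.
\]
Here $-w_k'(R)>0$ is increasing in $k$ and grows like $k$ as $k\to\infty$.

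\textbf{Spectral conditions.} We need $R_*$ with:
\begin{enumerate}[(i)]
\item $\sigma_1(R_*)=0$;
\item $\sigma_k(R_*)\ne0$ for all $k\ne1$, including $k=0$;
\item the transversality $\partial_R\sigma_1(R_*)\neq0$.
\end{enumerate}

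Since $-w_k'(R)$ is strictly increasing in $k$, condition (ii) for $k\ge2$ follows once (i) holds. The case $k=0$ corresponds to changing $R$ itself. For $\gamma=0$, $\sigma_0$ is the derivative in $R$ of $Q_R$ compared with $Q_R$; it is nonzero because the radial profile is explicit. Note that $k=0$ is not excluded by a mean-zero condition, which is why we check it directly.

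For (i) at $\gamma=0$, we look at the limits of the function $h(R)=-w_1'(R)R/(n-1)-1$. As $R\to1$, $-w_1'(R)\sim 1/(1-R)\to\infty$. As $R\to0$, we compare $w_1'$ with the $k=0$ profile term $(n-2)/R$ (for $n\ge3$; the logarithmic case is analogous) and show the sign changes. The intermediate value theorem then gives a zero. At that zero we must show the crossing is nondegenerate.

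\textbf{Main obstacle.} We expect the hardest step to be proving monotonicity of $R\mapsto\sigma_1(R)$, which gives both uniqueness of the crossing and transversality, using Bessel-function identities. Our first attempt will use the Riccati equation satisfied by $y=-rw_1'/w_1$ and a comparison argument. Since all quantities depend smoothly on $\gamma$, a simple transversal zero at $\gamma=0$ persists for $|\gamma|<\gamma_{\mathrm{cyl}}(n)$ by the implicit function theorem, and this produces $R_*(n,\gamma)$.

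\textbf{Conclusion.} With these conditions verified:
\begin{itemize}
\item the kernel of $L_{R_*}$ is $\mathrm{span}\{\cos t\}$;
\item the range of $L_{R_*}$ is the closed subspace orthogonal to $\cos t$, which is Fredholm of index $0$ since the multiplier is first order and elliptic;
\item $\partial_R L_R\cos t\notin\mathrm{Range}$, by transversality.
\end{itemize}
The Crandall--Rabinowitz theorem then gives the smooth curve $s\mapsto(R_s,\varphi_s)$ with $\varphi_s=s\cos t+s\mu_s$ and $\mu_s\perp\cos t$. Positivity $Q^{\mathrm{cyl}}_{R_s}>0$ for small $\gamma$ follows from the explicit formula for $U_R$.
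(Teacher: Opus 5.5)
Your architecture is the paper's: Crandall--Rabinowitz with $R$ as the parameter, multipliers $\sigma_k=Q_R^{\mathrm{cyl}}\mu_k(R)-\kappa_R^{\mathrm{cyl}}$ built from $r^{-\beta}I_\beta(kr)$ and $r^{-\beta}K_\beta(kr)$, the Riccati equation for $y=-rw_1'/w_1$, and the implicit function theorem in $\gamma$. Your signs are in fact correct. The step that fails as written is the constant mode.

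You are right that $\sigma_0(R)=\partial_R Q_R^{\mathrm{cyl}}$, since a constant $\varphi\equiv c$ gives $F(R,c)=Q^{\mathrm{cyl}}_{R+c}-Q^{\mathrm{cyl}}_R$. But the explicit profile does not make this nonzero. At $\gamma=0$ and $n\ge3$, $Q_R^{\mathrm{cyl}}=(n-2)/(R-R^{n-1})$ has a critical point at $R^{n-2}=1/(n-1)$. For $n=2$, $Q_R^{\mathrm{cyl}}=1/(R\log(1/R))$ has one at $R=e^{-1}$. Since $R_0$ is defined only implicitly through Bessel functions, the explicit formula cannot show that $R_0$ avoids this point.

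The missing ingredient is to run the comparison in $k$ down to $k=0$. Let $p_j$ be the normalized profiles, with $p_j(R)=1$, $p_j(1)=0$ and $0<p_j<1$. Integrate the identity $\frac{d}{dr}\bigl[r^{n-1}(p_\ell'p_k-p_k'p_\ell)\bigr]=(\ell^2-k^2)r^{n-1}p_kp_\ell$ over $(R,1)$. This gives $R^{n-1}(\mu_\ell-\mu_k)=(\ell^2-k^2)\int_R^1r^{n-1}p_kp_\ell\,dr>0$ for all $0\le k<\ell$. At a root of $\sigma_1$ you then have $\sigma_k(R_*)=Q_{R_*}^{\mathrm{cyl}}(\mu_k-\mu_1)$. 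This is negative for $k=0$ and positive for $k\ge2$, for every small $\gamma$ at once. It also proves the monotonicity in $k$ that you only assert.

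The step you call the main obstacle is lighter than you expect. Transversality does not need global monotonicity of $R\mu_1$. Evaluating $ry'=y^2-(n-2)y-r^2$ at $y=n-1$ gives $R_0M_1'(R_0)=n-1-R_0^2>0$ directly. Global monotonicity only gives uniqueness of the crossing.

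Second, ``Fredholm of index $0$ since the multiplier is first order and elliptic'' is not an argument for $C^{2,\alpha}_{p,e}\to C^{1,\alpha}_{p,e}$. H\"older norms are not characterized by Fourier coefficients, and $\mu_k\sim k$ gives neither boundedness nor invertibility between these spaces. The paper writes $L_R=Q_R^{\mathrm{cyl}}T_R-\kappa_R^{\mathrm{cyl}}J$. It proves that the Dirichlet-to-Neumann map $T_R$ is an isomorphism $X\to Y$, using Schauder estimates for the mixed problem (Neumann on $|x|=R$, Dirichlet on $|x|=1$) and an energy argument for injectivity. It then uses compactness of $J:X\hookrightarrow Y$. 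Index zero and closed range follow, and the range is identified through the $\cos t$ projection.

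Finally, the mixed derivative $\partial_RD_\varphi F$ in the transversality condition requires $F$ to be jointly $C^2$ in $(R,\varphi)$ on a domain independent of $R$. Your strip $(R,1)\times\mathbb R/2\pi\mathbb Z$ moves with $R$. Pull back instead to a fixed $A_\rho$ with displacement $R-\rho+\varphi$, as the paper does.
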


The rest of this paper is arranged as follows. In Section 2, we prove the bounded perturbation result by an implicit function theorem. In Section 3, we compute the cylindrical linearization, establish its spectral and Fredholm properties, and prove the periodic bifurcation result.

\section{Bounded case}

\subsection{Function spaces and abstract operators}
In this subsection, we introduce the operator for the bounded problem. Let
\begin{equation}\label{e2.1}
\mathcal H=\left\{h\in C^{1,\alpha}(\partial D_0):
\int_{\partial D_0}h=0\right\}.
\end{equation}
For sufficiently small $(f,g)\in\mathcal F\times\mathcal G$, let $u_{f,g}$ be the unique solution of
\begin{equation}\label{e2.2}
\left\{
\begin{array}{ll}
-\Delta u_{f,g}=\gamma &\text{in }\Omega_g\setminus\overline{D_f},\\
u_{f,g}=0 &\text{on }\partial\Omega_g,\\
u_{f,g}=1 &\text{on }\partial D_f.
\end{array}
\right.
\end{equation}
In a neighborhood of $(0,0)$, we define
\begin{equation}\label{e2.3}
\Psi:\mathcal F\times\mathcal G\longrightarrow\mathcal H,\qquad
\Psi(f,g)=q_{f,g}-\frac1{|\partial D_0|}\int_{\partial D_0}q_{f,g},
\end{equation}
where the pulled-back Neumann trace is
\begin{equation}\label{e2.4}
q_{f,g}(x)=\nabla u_{f,g}(x+f(x)\mathbf n(x))
\cdot\mathbf n_{f,g}(x+f(x)\mathbf n(x)),\qquad x\in\partial D_0.
\end{equation}
Here $\mathbf n_{f,g}$ is the outer unit normal to the perturbed annular region. Since the Dirichlet value is constant on the inner boundary and $Q_R^{\mathrm{bd}}>0$, we have that
\begin{equation}\label{e2.5}
\mathbf n_{f,g}=\frac{\nabla u_{f,g}}{|\nabla u_{f,g}|}
\quad\text{on }\partial D_f.
\end{equation}
Thus $\Psi(f,g)=0$ means exactly that \eqref{e1.1} holds with the positive constant
\[
Q=\frac1{|\partial D_0|}\int_{\partial D_0}q_{f,g}.
\]

\subsection{shape derivatives}
For $f_0\in\mathcal F$ and small $t$, set $u_t=u_{tf_0,0}$ and $u=u_R$. For each fixed $x$ in the reference annulus $A_0=\Omega_0\setminus\overline{D_0}$, $u_t(x)$ is defined for all sufficiently small $|t|$. Its Eulerian derivative is
\[
u_f'(x)=\left.\frac{d}{dt}\right|_{t=0}u_t(x).
\]
To obtain boundary traces, choose a $C^{2,\alpha}$ vector field $\mathbf p$ extending $f_0\mathbf n$ on $\partial D_0$ and vanishing near $\partial\Omega_0$, and pull the solutions back by $\Theta_t=\mathrm{Id}+t\mathbf p$. The material derivative is $\dot u_f=\left.\frac{d}{dt}\right|_{t=0}(u_t\circ\Theta_t)$, and the shape derivative satisfies \cite{HenrotP}
\begin{equation}\label{e2.6}
u_f'=\dot u_f-\nabla u\cdot\mathbf p.
\end{equation}
Because $\gamma$ is fixed, differentiation of the equation gives a harmonic shape derivative. Differentiating the constant Dirichlet data on both boundaries gives
\begin{equation}\label{e2.7}
\left\{
\begin{array}{ll}
-\Delta u_f'=0 &\text{in }A_0,\\
u_f'=0 &\text{on }\partial\Omega_0,\\
u_f'=-Q_R^{\mathrm{bd}}f_0 &\text{on }\partial D_0.
\end{array}
\right.
\end{equation}
Similarly, an outer displacement $g_0\mathbf n$ gives the boundary value $u_g'=-\partial_{\mathbf n}u\,g_0$ on $\partial\Omega_0$ and $u_g'=0$ on $\partial D_0$.

\subsection{Spherical harmonics}
The real spherical harmonics of degree $k\geq 0$ are the eigenfunctions of the negative spherical
Laplace-Beltrami operator $-\Delta_{\mathbb{S}^{N-1}}$ corresponding to the eigenvalue $\lambda_k=k(N+k-2)$, that is,
$$
-\Delta_{\mathbb{S}^{N-1}} Y^i_k=\lambda_k Y^i_k.
$$
Write $d_{k,N}$ for the multiplicity of $\lambda_k$, then
\[
d_{k,N}=\frac{(2k+N-2)(k+N-3)!}{k!(N-2)!} \quad \text{for}~~N\geq3,
\]
while $d_{0,2}=1$ and $d_{k,2}=2$ for $k\geq1$. Choose an orthonormal basis $\{Y_k^i:1\leq i\leq d_{k,N}\}$ in each eigenspace. These bases together form a complete orthonormal basis of $L^2(\mathbb S^{N-1})$. We identify functions on $\partial D_0$ with functions of $\theta$ by $x=R\theta$.

Thus, for any $u\in L^2(\mathbb{S}^{N-1})$, we have that
\begin{equation}\label{e2.8}
u=\sum_{k\geq 0}\sum_{i=1}^{d_{k,N}}\alpha_k^i Y_k^i, \quad \alpha_k^i=\int_{\mathbb{S}^{N-1}}u  Y_k^i~d\mathbb{S}^{N-1}.
\end{equation}
For each $k\geq 0$ we define the linear subspace
\begin{equation}\label{e2.9}
\mathbf{Y}_{k}:=\text{span}\{Y_k^i:1\leq i\leq d_{k,N}\}
\end{equation}
with any $u\in \mathbf{Y}_{k}$ being such that $-\Delta_{\mathbb{S}^{N-1}}u=\lambda_k u$.
In addition, for any $s\geq0$, we can define the inhomogeneous Sobolev space $H^s(\mathbb{S}^{N-1})$. More precisely, we set
\begin{equation}\label{e2.10}
H^s(\mathbb{S}^{N-1}):=\{u\in L^2(\mathbb{S}^{N-1}): u=\sum_{k\geq 0}\sum_{i=1}^{d_{k,N}}\alpha_k^i Y_k^i,\quad \sum_{k\geq 0}\sum_{i=1}^{d_{k,N}}(1+k^2)^s(\alpha_k^i)^2<\infty\}.
\end{equation}
Notice that the eigenspace corresponding to
the eigenvalue $\lambda_0=0$ is the one-dimensional space of constant functions on $\mathbb{S}^{N-1}$. Moreover, such constant function term does not appear in the expansion of a function
of zero average on $\mathbb{S}^{N-1}$.

\subsection{The Proof of Theorem \ref{thm1.1}}
For any fixed $g\in\mathcal G$, zeros of $\Psi(\cdot,g)$ correspond to solutions of \eqref{e1.1} with a constant $Q$ determined by the inner Neumann trace. We therefore seek small $f\in\mathcal F$ such that
\begin{equation}\label{e3.1}
\Psi(f,g)=0.
\end{equation}
Since $u_R$ is radial, $\Psi(0,0)=0$.

\begin{theorem}\label{thm:bounded-linearization}
The map $\Psi$ is smooth near $(0,0)\in\mathcal F\times\mathcal G$, and
\begin{equation}\label{e3.2}
\partial_f\Psi(0,0)[f_0]=\partial_{\mathbf n}u_f'
+\kappa_R^{\mathrm{bd}}f_0,\qquad f_0\in\mathcal F,
\end{equation}
where $u_f'$ solves \eqref{e2.7}.
\end{theorem}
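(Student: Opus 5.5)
The plan has two parts: smoothness via a fixed-domain reformulation, and the derivative formula via a shape-calculus computation on the inner boundary.

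\textbf{Smoothness.} I would first fix cut-off vector fields. Pick $\chi_1,\chi_2\in C^\infty(\overline{A_0})$ with $\chi_1=1$ near $\partial D_0$ and $\chi_1=0$ near $\partial\Omega_0$, and with $\chi_2$ the reverse. Let $E_1f$ and $E_2g$ be harmonic (or radially constant) extensions of $f$ and $g$ to a neighbourhood of the respective spheres. Then set
\[
\Theta_{f,g}=\mathrm{Id}+\chi_1E_1f\,\mathbf n+\chi_2E_2g\,\mathbf n .
\]
For small $(f,g)$ this is a $C^{2,\alpha}$ diffeomorphism of $\overline{A_0}$ onto $\overline{\Omega_g\setminus D_f}$, and it depends linearly, hence smoothly, on $(f,g)$.

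Pulling back gives $v_{f,g}=u_{f,g}\circ\Theta_{f,g}$. It solves $-L_{f,g}v=\gamma$ in $A_0$, with $v=0$ on $\partial\Omega_0$ and $v=1$ on $\partial D_0$. Here $L_{f,g}=\mathrm{div}(A_{f,g}\nabla\cdot)/\det D\Theta$, and its coefficients are polynomial/rational in $D\Theta$; they are therefore smooth in $(f,g)$ with values in $C^{1,\alpha}$.

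Schauder theory shows that $L_{0,0}=\Delta$ with Dirichlet data is an isomorphism $C^{2,\alpha}_0\to C^{0,\alpha}$. By the implicit function theorem, $(f,g)\mapsto v_{f,g}\in C^{2,\alpha}(\overline{A_0})$ is then smooth. The trace $q_{f,g}$ is a smooth algebraic expression in $\nabla v_{f,g}|_{\partial D_0}$ and $D\Theta_{f,g}$, using \eqref{e2.5} or the explicit transported normal. Hence $q$, and so $\Psi$, is smooth into $C^{1,\alpha}$. Subtracting the mean lands in $\mathcal H$.

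\textbf{Derivative formula.} Set $u_t=u_{tf_0,0}$. Differentiate at $t=0$ the identity
\[
q_{tf_0,0}(x)=\nabla u_t(x+tf_0\mathbf n)\cdot\mathbf n_t(x+tf_0\mathbf n).
\]
Three contributions appear.

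\emph{(i)} The Eulerian term is $\nabla u_f'\cdot\mathbf n=\partial_{\mathbf n}u_f'$. Here $u_f'$ solves \eqref{e2.7}, which follows from \eqref{e2.6}, the relation $\dot u_f=0$ on both boundaries, and $\nabla u=Q_R^{\mathrm{bd}}\mathbf n$ on $\partial D_0$.

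\emph{(ii)} The transport term is $f_0\,D^2u[\mathbf n,\mathbf n]=f_0\,u_R''(R)=\kappa_R^{\mathrm{bd}}f_0$. This uses radial symmetry: $\mathbf n=-x/R$, so $D^2u(\mathbf n,\mathbf n)=u_R''$.

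\emph{(iii)} The normal-variation term $\nabla u\cdot\dot{\mathbf n}$ vanishes. Since $|\mathbf n_t|=1$, the derivative $\dot{\mathbf n}$ is tangential, while $\nabla u$ is normal on $\partial D_0$.

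Adding the three gives $\partial_t q|_{t=0}=\partial_{\mathbf n}u_f'+\kappa_R^{\mathrm{bd}}f_0$. The mean-subtraction in \eqref{e2.3} is linear, so \eqref{e3.2} follows, with the mean-zero projection understood.

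If needed, I would also confirm the right-hand side already has zero mean for $f_0\in\mathcal F$. Indeed, $u_f'$ has zero boundary mean, so its zero-mode vanishes, and hence $\partial_{\mathbf n}u_f'$ has zero mean. So the projection is harmless.

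\textbf{Main obstacle.} The delicate step is justifying that the Eulerian derivative $u_f'$ has a $C^{1,\alpha}$ trace up to $\partial D_0$, and that the formal chain rule holds. I would handle this by working with the material derivative $\dot u_f=\partial_t v_{tf_0,0}|_{t=0}$, which is $C^{2,\alpha}$ up to the boundary by the smoothness step. Then I would define $u_f'$ through \eqref{e2.6}. Since $\mathbf p\in C^{2,\alpha}$ and $u$ is smooth, $u_f'\in C^{2,\alpha}(\overline{A_0})$, and the chain-rule computation above becomes rigorous.
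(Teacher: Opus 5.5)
Your proposal is correct and matches the paper's proof step for step. Smoothness comes from pulling back to the fixed annulus $A_0$ by a $C^{2,\alpha}$ diffeomorphism and smoothly inverting the Dirichlet Laplacian. The formula comes from differentiating \eqref{e2.4} into three pieces: the Eulerian term $\partial_{\mathbf n}u_f'$, the transport term $u_R''(R)f_0=\kappa_R^{\mathrm{bd}}f_0$, and a normal-variation term that vanishes because $\dot{\mathbf n}$ is tangential. The mean-zero check is done through the spherical average of $u_f'$, as in the paper. Your extra step of defining $u_f'$ through the material derivative \eqref{e2.6} is a useful addition: it justifies the $C^{1,\alpha}$ trace and the chain rule up to $\partial D_0$, which the paper leaves to its cited shape-calculus references.
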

\begin{proof}
Extend the normal graphs to a $C^{2,\alpha}$ diffeomorphism $\Theta_{f,g}$ of the reference annulus. The pull-back $u_{f,g}\circ\Theta_{f,g}$ solves a uniformly elliptic Dirichlet problem on $A_0$ whose coefficients depend smoothly on $(f,g)$. The Dirichlet operator is an isomorphism from the space of $C^{2,\alpha}$ functions with zero boundary values to $C^{0,\alpha}$. Smooth dependence of its inverse and Schauder estimates give a smooth solution map into $C^{2,\alpha}$, and a smooth pulled-back Neumann map into $C^{1,\alpha}$; see \cite{Gilbarg,HenrotP}.

Set $u_t=u_{tf_0,0}$. At the reference boundary, the derivative of the unit normal is tangential and $\nabla u_R$ is normal. Hence differentiating \eqref{e2.4}, we can obtain that  
\begin{equation}\label{e3.5}
\begin{aligned}
\left.\frac{d}{dt}\right|_{t=0}q_{tf_0,0}
=\left.\frac{d}{dt}\right|_{t=0}\nabla u_{tf_0,0}(x+tf_0\mathbf n)\cdot \mathbf n
&=\left(\nabla u_f'+D^2u_R f_0 \mathbf n \right)\cdot \mathbf n \\
=\partial_{\mathbf n}u_f'+u''_{R}f_0=\partial_{\mathbf n}u_f'+\kappa_R^{\mathrm{bd}}f_0,
\end{aligned}
\end{equation}
where $u'_f$ is the solution of \eqref{e2.7} (see also \cite[Theorem 3.3]{Cav}).
Since $\int_{\partial D_0}f_0=0$, the spherical average of $u_f'$ solves the radial harmonic equation with zero values at both boundaries. Therefore, $\int_{\partial D_0}\partial_{\mathbf n}u_f'=0$, and the average of the right-hand side of \eqref{e3.5} vanishes. It follows from \eqref{e2.3} and \eqref{e3.5}
that 
$$
\partial_f\Psi(0,0)[f_0]=\left.\frac{d}{dt}\right|_{t=0}q_{tf_0,0}=\partial_{\mathbf n}u_f'+\kappa_R^{\mathrm{bd}}f_0.
$$
\end{proof}

We now compute the multipliers by spherical harmonics.
\begin{theorem}\label{thm:bounded-expansion}
Let $f_0\in\mathcal F$, with expansion
\begin{equation}\label{e3.7}
f_0(R\theta)=\sum_{k\geq1}\sum_{i=1}^{d_{k,N}}\alpha_k^iY_k^i(\theta).
\end{equation}
Then the solution of \eqref{e2.7} has the $L^2$ spherical harmonic expansion
\[
u_f'(r,\theta)=\sum_{k\geq1}\sum_{i=1}^{d_{k,N}}
(A_kr^k+B_kr^{2-N-k})\alpha_k^iY_k^i(\theta),
\]
where, for all $N\geq2$ and $k\geq1$,
\[
A_k=-\frac{Q_R^{\mathrm{bd}}}{R^k-R^{2-N-k}},\qquad
B_k=\frac{Q_R^{\mathrm{bd}}}{R^k-R^{2-N-k}}.
\]
\end{theorem}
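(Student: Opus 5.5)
We separate variables in polar coordinates $x=r\theta$ on the reference annulus $A_0=\{R<r<1\}$. Since $f_0\in\mathcal F$ has zero average, its expansion \eqref{e3.7} contains no $k=0$ term, and the Dirichlet data in \eqref{e2.7} are $-Q_R^{\mathrm{bd}}f_0$ on $r=R$ and $0$ on $r=1$. For each spherical harmonic $Y_k^i$, we use the decomposition $\Delta=\partial_r^2+\frac{N-1}{r}\partial_r+\frac{1}{r^2}\Delta_{\mathbb S^{N-1}}$. A function $c(r)Y_k^i(\theta)$ is harmonic iff
\[
c''+\frac{N-1}{r}c'-\frac{k(N+k-2)}{r^2}c=0 .
\]
This is an Euler equation with indicial roots $k$ and $2-N-k$, so its general solution is $c(r)=Ar^k+Br^{2-N-k}$. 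For $k\ge1$ and all $N\geq2$ the two roots are distinct (for $N=2$ they are $\pm k$, so no logarithm appears). This is why the statement holds uniformly in $N\geq2$.

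Next we impose the boundary conditions mode by mode, writing $c_k(r)\alpha_k^iY_k^i$. The condition at $r=1$ gives $A_k+B_k=0$. The condition at $r=R$ gives $A_kR^k+B_kR^{2-N-k}=-Q_R^{\mathrm{bd}}$. Substituting $B_k=-A_k$ yields $A_k(R^k-R^{2-N-k})=-Q_R^{\mathrm{bd}}$, which is exactly the stated $A_k$, and then $B_k=-A_k$. The denominator is nonzero because $0<R<1$ and $k>2-N-k$ for $k\ge1$, $N\ge2$. Hence $R^k<R^{2-N-k}$, and in fact $R^k-R^{2-N-k}<0$.

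Finally, we justify that the formal series is the solution in the $L^2$ sense. Since $f_0\in C^{2,\alpha}\subset L^2(\mathbb S^{N-1})$, the coefficients $\alpha_k^i$ are square-summable. For $R\le r\le1$ we have $|A_kr^k+B_kr^{2-N-k}|\le |Q_R^{\mathrm{bd}}|\,\frac{R^{2-N-k}-r^{2-N-k}\cdot 0 + r^{2-N-k}}{R^{2-N-k}-R^k}\le C$, uniformly in $k$. Indeed, $c_k(r)=Q_R^{\mathrm{bd}}\frac{r^{2-N-k}-r^k}{R^k-R^{2-N-k}}\cdot(-1)$ has absolute value at most $Q_R^{\mathrm{bd}}$, since the map $r\mapsto r^{2-N-k}-r^k$ is decreasing on $[R,1]$. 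So the series converges in $L^2$ on each sphere, uniformly in $r$. In the interior, $c_k(r)$ decays geometrically in $k$, so the series converges in $C^\infty_{loc}(A_0)$ to a harmonic function. That function has the correct traces, and by uniqueness for the Dirichlet problem \eqref{e2.7} (maximum principle) it coincides with $u_f'$.

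The only mildly delicate point is this convergence and identification step; the algebra itself is routine. I would handle it by comparing each spherical-harmonic coefficient $\int_{\mathbb S^{N-1}}u_f'(r\theta)Y_k^i\,d\theta$ of the true solution with $c_k(r)\alpha_k^i$. Both satisfy the same radial ODE and the same boundary values, obtained by integrating \eqref{e2.7} against $Y_k^i$ and using the self-adjointness of $\Delta_{\mathbb S^{N-1}}$. This gives the expansion directly without any convergence argument in stronger norms.
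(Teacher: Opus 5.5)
Your proposal is correct and, in its final form, follows the same route as the paper's proof: project $u_f'$ onto each $Y_k^i$, use the self-adjointness of $\Delta_{\mathbb S^{N-1}}$ to get the Euler equation with the correct boundary values, invoke uniqueness for the two-point radial problem, and conclude by $L^2$ completeness, with no convergence argument in stronger norms. Two small slips need fixing. First, your closed form for $c_k$ has the wrong overall sign, since it gives $c_k(R)=+Q_R^{\mathrm{bd}}$ instead of $-Q_R^{\mathrm{bd}}$; this does not affect the bound $|c_k|\le Q_R^{\mathrm{bd}}$. Second, in the projection route you should state explicitly, as the paper does, that the $k=0$ coefficient of $u_f'$ vanishes because it solves the radial equation with zero data at both ends.
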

\begin{proof}
We first derive the equation satisfied by each spherical harmonic coefficient of the solution. Write $v=u_f'$ and, for $k\geq0$ and $1\leq i\leq d_{k,N}$, define
\[
v_k^i(r)=\int_{\mathbb S^{N-1}}v(r,\theta)Y_k^i(\theta)\,d\theta,
\qquad R\leq r\leq1.
\]
The solution $v$ belongs to $C^{2,\alpha}(\overline{A_0})$, so we can differentiate these coefficients twice with respect to $r$. In polar coordinates,
\[
\Delta v=\partial_{rr}v+\frac{N-1}{r}\partial_rv
+\frac1{r^2}\Delta_{\mathbb S^{N-1}}v.
\]
We multiply the equation $\Delta v=0$ by $Y_k^i$ and integrate over the sphere. Integration by parts on $\mathbb S^{N-1}$ gives
\[
\int_{\mathbb S^{N-1}}\Delta_{\mathbb S^{N-1}}v\,Y_k^i\,d\theta
=\int_{\mathbb S^{N-1}}v\,\Delta_{\mathbb S^{N-1}}Y_k^i\,d\theta
=-k(k+N-2)v_k^i(r).
\]
Thus each coefficient satisfies
\[
(v_k^i)''+\frac{N-1}{r}(v_k^i)'
-\frac{k(k+N-2)}{r^2}v_k^i=0.
\]
The boundary conditions in \eqref{e2.7} imply
\[
v_k^i(1)=0,\qquad
v_k^i(R)=-Q_R^{\mathrm{bd}}\alpha_k^i
\quad(k\geq1).
\]
The constant coefficient has zero data at both endpoints. Indeed,
\[
\int_{\mathbb S^{N-1}}f_0(R\theta)\,d\theta
=R^{1-N}\int_{\partial D_0}f_0=0.
\]
It follows that $v_0^1=0$. This explains why the expansion starts from $k=1$.

For $k\geq1$, we find the radial factor $S_k$ with inner value $-Q_R^{\mathrm{bd}}$ and outer value zero. It solves
\begin{equation}\label{e3.8}
S_k''+\frac{N-1}{r}S_k'
-\frac{k(k+N-2)}{r^2}S_k=0,
\qquad R<r<1.
\end{equation}
Since this is an Euler equation, we substitute $S_k=r^a$. The characteristic equation is
\[
a(a-1)+(N-1)a-k(k+N-2)
=(a-k)(a+k+N-2)=0.
\]
Its roots are $k$ and $2-N-k$. They are distinct because $2k+N-2>0$. Therefore
\[
S_k(r)=A_kr^k+B_kr^{2-N-k}.
\]
The two boundary conditions give the linear system
\[
A_k+B_k=0,\qquad
A_kR^k+B_kR^{2-N-k}=-Q_R^{\mathrm{bd}}.
\]
We first obtain $B_k=-A_k$ from the outer condition. Substitution into the inner condition yields
\[
A_k=-\frac{Q_R^{\mathrm{bd}}}{R^k-R^{2-N-k}},\qquad
B_k=\frac{Q_R^{\mathrm{bd}}}{R^k-R^{2-N-k}}.
\]
The denominator is nonzero: with $m_k=2k+N-2>0$,
\[
R^k-R^{2-N-k}=R^{2-N-k}(R^{m_k}-1)<0.
\]
Equivalently,
\[
S_k(r)=-Q_R^{\mathrm{bd}}
\frac{r^k-r^{2-N-k}}{R^k-R^{2-N-k}}.
\]
This formula also applies when $N=2$ and $k\geq1$: the two radial powers are $r^k$ and $r^{-k}$. The repeated-root case, with radial solutions $1$ and $\log r$, occurs only when $N=2$ and $k=0$, whose coefficient has already been shown to vanish.

Uniqueness for the radial boundary value problem now gives $v_k^i=\alpha_k^iS_k$. For each fixed $r\in[R,1]$, completeness of the spherical harmonics reconstructs $v(r,\cdot)$ in $L^2(\mathbb S^{N-1})$ and proves the stated expansion. The same projection calculation applied to the radial derivative gives
\[
(v_k^i)'(r)=\alpha_k^iA_k
\big(k r^{k-1}+(N+k-2)r^{1-N-k}\big).
\]
In particular, because the fluid normal on the inner sphere is $\mathbf n=-\partial_r$, the coefficients of its Neumann trace are
\[
\int_{\mathbb S^{N-1}}\partial_{\mathbf n}v(R,\theta)
Y_k^i(\theta)\,d\theta
=-\alpha_k^iA_k
\big(kR^{k-1}+(N+k-2)R^{1-N-k}\big).
\]
These are the coefficients used in the linearized boundary operator below.
\end{proof}

Combining Theorems~\ref{thm:bounded-linearization} and \ref{thm:bounded-expansion}, we obtain
\begin{equation}\label{e3.9}
\partial_f\Psi(0,0)[f_0]
=\sum_{k\geq1}\sum_{i=1}^{d_{k,N}}\sigma_k\alpha_k^iY_k^i,
\end{equation}
where, with $m_k=2k+N-2$,
\begin{equation}\label{e3.10}
\begin{aligned}
\sigma_k
&=\kappa_R^{\mathrm{bd}}
-A_k\big(kR^{k-1}+(N+k-2)R^{1-N-k}\big)\\
&=-Q_R^{\mathrm{bd}}\delta_k(R)-\gamma,\qquad
\delta_k(R)=\frac{k-1+(k+N-1)R^{m_k}}{R(1-R^{m_k})}>0.
\end{aligned}
\end{equation}
Here the minus sign in the normal derivative is essential because $\mathbf n=-\partial_r$ on $\partial D_0$. In particular, the multipliers are negative at $\gamma=0$. They remain nonzero uniformly in $k$ for small $|\gamma|$. Indeed,
\[
\delta_1(R)=\frac{NR^{N-1}}{1-R^N},\qquad
\delta_k(R)\geq\frac1R\quad(k\geq2).
\]
Let $\delta_*:=\min\{\delta_1(R),1/R\}>0$ and $Q_0:=Q_R^{\mathrm{bd}}|_{\gamma=0}>0$. Choose $\gamma_{\mathrm{bd}}(R,N)>0$ so small that $Q_R^{\mathrm{bd}}\geq Q_0/2$ and $|\gamma|<Q_0\delta_*/4$ throughout the stated interval. Then
\[
\sigma_k\leq-\frac{Q_0\delta_*}{4}<0\qquad(k\geq1).
\]

\begin{proof}[Proof of Theorem~\ref{thm1.1}]
Let $T$ be the inner Dirichlet-to-Neumann map for harmonic functions on $A_0$ with zero outer Dirichlet data, acting from $C^{2,\alpha}(\partial D_0)$ to $C^{1,\alpha}(\partial D_0)$. This map is an isomorphism: for any inner Neumann datum $h\in C^{1,\alpha}$, the mixed problem
\begin{equation}\label{e3.12}
\left\{
\begin{array}{ll}
-\Delta w=0 &\text{in }A_0,\\
w=0 &\text{on }\partial\Omega_0,\\
\partial_{\mathbf n}w=h &\text{on }\partial D_0
\end{array}
\right.
\end{equation}
has a unique $C^{2,\alpha}$ solution and a Schauder estimate, since the two boundary components are disjoint and the outer Dirichlet condition removes the constant ambiguity \cite{Gilbarg}. Its inner trace is $T^{-1}h$.

Moreover $T:\mathcal F\to\mathcal H$ is also an isomorphism. Spherical averaging shows that mean-zero inner Dirichlet data give mean-zero inner Neumann data. Conversely, if $h$ has zero mean in \eqref{e3.12}, the spherical average of $w$ has zero outer value and zero inner normal derivative, and is identically zero; thus its inner trace has zero mean.

By \eqref{e2.7} and \eqref{e3.2},
\[
\partial_f\Psi(0,0)=-Q_R^{\mathrm{bd}}T+\kappa_R^{\mathrm{bd}}J,
\]
where $J:\mathcal F\to\mathcal H$ is the compact inclusion $C^{2,\alpha}\hookrightarrow C^{1,\alpha}$. The operator is therefore Fredholm of index zero. Formula \eqref{e3.10} shows that its kernel is zero, so it is an isomorphism. This argument establishes the H\"older inverse directly; nonzero multipliers alone would not justify an inverse between these spaces.

The implicit function theorem now gives a unique smooth map $g\mapsto f(g)\in\mathcal F$ solving \eqref{e3.1}, with $f(0)=0$. The resulting constant is
\[
Q(g)=\frac1{|\partial D_0|}\int_{\partial D_0}q_{f(g),g}.
\]
It is positive for small $g$ by continuity from $Q_R^{\mathrm{bd}}>0$. This proves the theorem.
\end{proof}

\section{Unbounded case}

\subsection{The transformed problem}
Let
\[
C^{k,\alpha}_{p,e}(\mathbb R)
=\{\psi\in C^{k,\alpha}(\mathbb R):\psi(-t)=\psi(t),\
\psi(t+2\pi)=\psi(t)\},
\]
with its usual periodic H\"older norm, and set $X=C^{2,\alpha}_{p,e}(\mathbb R)$ and $Y=C^{1,\alpha}_{p,e}(\mathbb R)$. We retain the constant mode in both spaces. Write
\[
A_\phi=\Omega_1^t\setminus\overline{D_\phi^t}
=\{(x,t):\phi(t)<|x|<1\},\qquad A_R=A_\phi\big|_{\phi\equiv R}.
\]
For $\phi=R+\varphi$ near a fixed $R\in(0,1)$, choose a smooth radial cutoff $\eta$ which equals $1/R$ near $|x|=R$ and vanishes near $|x|=1$. For sufficiently small $\|\varphi\|_{C^{2,\alpha}}$, the map
\[
\Phi_\phi:A_R\longrightarrow A_\phi,\qquad
\Phi_\phi(x,t)=\big((1+\eta(x)\varphi(t))x,t\big)
\]
is a $C^{2,\alpha}$ diffeomorphism. Let $g_\phi=\Phi_\phi^*g_e$. On the inner collar, where $\eta=1/R$,
\[
g_\phi=(1+\varphi/R)^2\,dx\cdot dx
+\big(1+|x|^2(\varphi'/R)^2\big)\,dt^2
+2(1+\varphi/R)(\varphi'/R)(x\cdot dx)\,dt.
\]
Near the outer boundary $g_\phi=g_e$, and at the reference profile,
\begin{equation}\label{eq3.1}
\Delta_{g_R}=\Delta_x+\partial_{tt}.
\end{equation}
The physical Dirichlet solution pulled back to $A_R$ solves
\begin{equation}\label{eq3.2}
\left\{
\begin{array}{ll}
-\Delta_{g_\phi}u_\phi=\gamma &\text{in }A_R,\\
u_\phi=0 &\text{on }|x|=1,\\
u_\phi=1 &\text{on }|x|=R.
\end{array}
\right.
\end{equation}
The additional Neumann condition, with $R$ serving as the bifurcation parameter, is
\begin{equation}\label{eq3.3}
\partial_{\nu_\phi}u_\phi=Q_R^{\mathrm{cyl}}
\quad\text{on }|x|=R,
\end{equation}
where $\nu_\phi$ is the outer $g_\phi$-unit normal to the fluid region. Since $\Phi_\phi$ is an isometry,
\begin{equation}\label{eq3.4}
d\Phi_\phi\,\nu_\phi=\mathbf n_\phi\circ\Phi_\phi,
\end{equation}
and the physical normal is
\begin{equation}\label{eq3.5}
\mathbf n_\phi(x,t)=\frac{(-x/|x|,\phi'(t))}{\sqrt{1+\phi'(t)^2}}
\quad\text{on }|x|=\phi(t).
\end{equation}

\begin{theorem}\label{thm3.1}
For $\phi$ in a sufficiently small neighborhood of $R$ in $X$, \eqref{eq3.2} has a unique even, $2\pi$-periodic solution $u_\phi\in C^{2,\alpha}(\overline{A_R})$. The pulled-back solution and its inner Neumann trace depend smoothly on $\phi$ and are radial in $x$. At $\phi\equiv R$, the solution is $u_R(x,t)=U_R(|x|)$, where
\begin{equation}\label{eq3.6}
U_R(r)=
\begin{cases}
\dfrac{\gamma}{2n}(1-r^2)+b_R(r^{2-n}-1),&n\geq3,\\[1ex]
\dfrac{\gamma}{4}(1-r^2)+b_R\log r,&n=2,
\end{cases}
\quad
b_R=
\begin{cases}
\dfrac{1+\frac{\gamma}{2n}(R^2-1)}{R^{2-n}-1},&n\geq3,\\[1ex]
\dfrac{1+\frac{\gamma}{4}(R^2-1)}{\log R},&n=2.
\end{cases}
\end{equation}
\end{theorem}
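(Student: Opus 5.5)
We will follow the same strategy as in the proof of Theorem~\ref{thm:bounded-linearization}: reduce \eqref{eq3.2} to a fixed-domain elliptic problem whose dependence on $\phi$ is smooth, and then use symmetry. The new difficulty is that $A_R$ is unbounded. We handle this by working on the compact quotient $\overline{A_R}/(2\pi\mathbb Z)$, that is, the annular region $\{R\le|x|\le1\}\times\mathbb S^1$. Since $\varphi$ is $2\pi$-periodic and $\eta$ depends only on $x$, the metric $g_\phi$ is invariant under $t\mapsto t+2\pi$ and descends to this compact manifold with boundary. Its coefficients are $C^{1,\alpha}$ and depend smoothly (indeed polynomially, up to the factor $(1+\eta\varphi)$) on $\varphi\in X$. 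Hence $\Delta_{g_\phi}$ is a uniformly elliptic operator with $C^{0,\alpha}$ lower-order coefficients for $\varphi$ small.

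First we subtract a fixed $\phi$-independent function that realizes the boundary data, for instance $U_R(|x|)$. We then write $u_\phi=U_R+w$, where $w$ vanishes on both boundary components and solves
\[
-\Delta_{g_\phi}w=\gamma+\Delta_{g_\phi}U_R .
\]
At $\phi\equiv R$, by \eqref{eq3.1}, the operator $-\Delta_{g_R}=-\Delta_x-\partial_{tt}$ is an isomorphism from $C^{2,\alpha}$ functions on the compact quotient with zero Dirichlet data onto $C^{0,\alpha}$. Injectivity follows from the maximum principle, and surjectivity with estimates follows from Schauder theory on a compact manifold with boundary \cite{Gilbarg}. The map $\varphi\mapsto-\Delta_{g_\phi}$ is smooth into the bounded operators between these spaces, so invertibility persists for $\varphi$ small and the inverse depends smoothly on $\varphi$. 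Consequently $w$, and hence $u_\phi$, is smooth in $\varphi$ with values in $C^{2,\alpha}$. The inner Neumann trace $\partial_{\nu_\phi}u_\phi$ is a smooth expression in $\nabla u_\phi$ and the metric coefficients, so it is smooth into $C^{1,\alpha}$.

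Uniqueness among periodic solutions is just injectivity on the quotient. We derive the symmetries from uniqueness.
\begin{enumerate}
\item Rotations $x\mapsto Ox$ with $O\in O(n)$ preserve $\eta$ and therefore $g_\phi$, so $u_\phi\circ O$ solves the same problem and $u_\phi$ is radial in $x$.
\item The reflection $t\mapsto-t$ pulls $g_\phi$ back to $g_{\phi(-\cdot)}=g_\phi$, because $\phi$ is even; the cross term changes sign through both $dt$ and $\varphi'$, so it is invariant. Hence $u_\phi$ is even in $t$.
\end{enumerate}

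Finally, at $\phi\equiv R$ the function $U_R(|x|)$ satisfies $-U''-\frac{n-1}{r}U'=\gamma$, which is the radial form of $-\Delta_x U=\gamma$. We check this directly from \eqref{eq3.6}: $r^2$ gives $-\Delta_x\frac{\gamma}{2n}(1-r^2)=\gamma$, while $r^{2-n}$ and $\log r$ are harmonic. The choice of $b_R$ enforces $U_R(R)=1$, and $U_R(1)=0$ holds trivially. By uniqueness, $u_R=U_R$.

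The main obstacle is the regularity bookkeeping. The metric involves $\varphi'$, so with $\varphi\in C^{2,\alpha}$ its coefficients are only $C^{1,\alpha}$. Since $\Delta_{g_\phi}$ contains first derivatives of $g_\phi$, we must check that it still maps $C^{2,\alpha}$ into $C^{0,\alpha}$ smoothly in $\varphi$, and that the Schauder theory applies with $C^{0,\alpha}$ lower-order terms. This works exactly, with no loss of derivatives, which is what makes the solution map smooth into $C^{2,\alpha}$.
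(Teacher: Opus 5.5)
Your proposal is correct and follows essentially the same route as the paper: work on the compact quotient $A_R/(2\pi\mathbb Z)$, invert the Dirichlet problem with smooth dependence on $\phi$, get the radial and even symmetries from uniqueness, and identify $u_R=U_R$ by solving the radial Poisson problem. You add details the paper leaves implicit: subtracting $U_R$ to make the boundary data zero, perturbing off the flat isomorphism $-\Delta_x-\partial_{tt}$ using that inversion is smooth on invertible operators, and checking directly that $O\times\mathrm{id}$ and $t\mapsto -t$ are isometries of $g_\phi$.
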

\begin{proof}
Work on the compact quotient $A_R/(2\pi\mathbb Z)$. Uniform ellipticity and the Dirichlet Schauder estimates give existence and uniqueness. The coefficients depend smoothly on $\phi$, so the inverse Dirichlet operator and the solution map are smooth. Rotations in $x$ and reflection in $t$ preserve the equation and the boundary data, and uniqueness gives the asserted symmetries. Solving the radial Poisson equation gives \eqref{eq3.6}. This is the same fixed-domain method used for periodic overdetermined problems in \cite{FallMW}.
\end{proof}

We define the nonlinear map with two independent arguments:
\begin{equation}\label{eq3.7}
H(R,\varphi)(t)=
\partial_{\nu_{R+\varphi}}u_{R+\varphi}(Re_1,t)-Q_R^{\mathrm{cyl}},
\qquad H(R,0)=0,
\end{equation}
where $e_1$ is the first unit vector in $\mathbb R^n$. The Neumann trace is evaluated at $Re_1$, not at $e_1$, on the reference inner boundary. For joint smooth dependence on $(R,\varphi)$ near any $(\rho,0)$, use $A_\rho$ as the single fixed domain and the map
\[
(x,t)\longmapsto\big((1+\eta_\rho(x)(R-\rho+\varphi(t)))x,t\big).
\]
Here $\eta_\rho=1/\rho$ near the inner boundary and vanishes near the outer boundary. Theorem~\ref{thm3.1} applied on this fixed quotient shows that $H$ is smooth into $Y$. The physical boundary trace is independent of the chosen pull-back. Thus $H(R,\varphi)=0$ is equivalent to \eqref{e1.6}.

\subsection{The linearization}
In this subsection $R\in(0,1)$ is fixed, and all the pulled-back functions are defined on $A_R$. For $\psi\in X$, consider
\[
\phi_\varepsilon=R+\varepsilon\psi,
\qquad \Phi_\varepsilon=\Phi_{\phi_\varepsilon},
\qquad g_\varepsilon=g_{\phi_\varepsilon}.
\]
For sufficiently small $|\varepsilon|$, these maps are well defined by the construction in the preceding subsection. Differentiation of the pull-back gives
\begin{equation}\label{eq3.8}
[D_\phi\Phi_\phi]\psi=(\eta(x)x\psi(t),0).
\end{equation}
We denote this vector field by $\mathbf p_\psi$. On the inner boundary, where $\eta=1/R$, it has the value $\mathbf p_\psi(R\theta,t)=(\theta\psi(t),0)$; it vanishes near the outer boundary. Thus $\psi$ describes a displacement in the positive radial direction, whereas the reference outer normal to the fluid region is $\nu=(-x/|x|,0)$ on $|x|=R$.

Extend the radial formula $U_R$ from \eqref{eq3.6} to a collar of $\overline{A_R}$, and regard it as the function $(y,t)\mapsto U_R(|y|)$. Set $u^\phi=U_R\circ\Phi_\phi$. This is the pull-back of the reference radial profile, while $u_\phi$ denotes the pulled-back Dirichlet solution from \eqref{eq3.2}. Since $-\Delta U_R=\gamma$ throughout this collar and $\Phi_\phi$ is an isometry from $(A_R,g_\phi)$ to the physical domain, we have
\begin{equation}\label{eq3.10}
-\Delta_{g_\phi}u^\phi=\gamma.
\end{equation}
In particular, the right-hand side is the same fixed constant as in \eqref{eq3.2}. The two functions need not have the same inner boundary value: on $|x|=R$, $u^\phi=U_R(\phi(t))$, whereas $u_\phi=1$. They both vanish on $|x|=1$.

For clarity, write $\widehat u_\phi=u_\phi\circ\Phi_\phi^{-1}$ for the physical Dirichlet solution. The isometry identity \eqref{eq3.4} gives
\[
\partial_{\nu_\phi}u_\phi(Re_1,t)
=\partial_{\mathbf n_\phi}\widehat u_\phi(\phi(t)e_1,t).
\]
The left-hand side is a metric normal derivative on the fixed inner boundary, and the right-hand side is the Euclidean normal derivative on the moving physical boundary. We differentiate the former at the fixed reference point $(Re_1,t)$.

\begin{lemma}\label{lem3.2}
The map $\phi\mapsto\partial_{\nu_\phi}u^\phi(Re_1,\cdot)$ is smooth into $Y$, and
\begin{equation}\label{eq3.11}
\left[D_\phi\big|_{\phi=R}\,
\partial_{\nu_\phi}u^\phi(Re_1,\cdot)\right]\psi
=-\kappa_R^{\mathrm{cyl}}\psi.
\end{equation}
\end{lemma}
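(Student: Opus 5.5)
The plan is to avoid any PDE argument and compute the quantity in closed form, using the isometry identity \eqref{eq3.4}. Since $u^\phi=U_R\circ\Phi_\phi$ and $\Phi_\phi$ carries $(A_R,g_\phi)$ isometrically onto the physical domain, mapping $(Re_1,t)$ to $(\phi(t)e_1,t)$, we have
\[
\partial_{\nu_\phi}u^\phi(Re_1,t)=\nabla\big(U_R(|y|)\big)\big|_{(\phi(t)e_1,t)}\cdot\mathbf n_\phi(\phi(t)e_1,t).
\]
The Euclidean gradient of $(y,t)\mapsto U_R(|y|)$ at $(\phi(t)e_1,t)$ is $(U_R'(\phi(t))e_1,0)$. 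Inserting the normal \eqref{eq3.5} should then give the explicit formula
\[
\partial_{\nu_\phi}u^\phi(Re_1,t)=-\frac{U_R'(\phi(t))}{\sqrt{1+\phi'(t)^2}}.
\]
This uses only that $U_R$ has been extended radially to a collar of $\overline{A_R}$. That collar contains the points $|y|=\phi(t)$ for $\phi$ close to $R$, and on it $U_R$ is given by the real-analytic expression \eqref{eq3.6}.

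For smoothness, I would view the right-hand side as a composition of two maps. The first is the superposition $\phi\mapsto U_R'\circ\phi$ from a neighbourhood of $R$ in $X$ into $C^{2,\alpha}_{p,e}\subset Y$. The second is $\phi\mapsto(1+(\phi')^2)^{-1/2}$, built from the bounded linear map $\phi\mapsto\phi'$ from $X$ to $C^{1,\alpha}_{p,e}$ followed by composition with the smooth function $s\mapsto(1+s^2)^{-1/2}$. Composition with a $C^\infty$ function is a smooth Nemytskii operator on periodic H\"older spaces, and $Y$ is a Banach algebra, so the product is smooth into $Y$.

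Evenness and $2\pi$-periodicity are preserved under these operations. Here $\phi'$ is odd, but it enters only through $(\phi')^2$, so the result stays even.

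For the derivative, I would differentiate at $\phi=R$ in the direction $\psi$. The factor $(1+(\phi')^2)^{-1/2}$ has vanishing first variation at a constant profile, because its derivative is $-\phi'\psi'(1+(\phi')^2)^{-3/2}$, which is zero when $\phi'=0$. Therefore
\[
\Big[D_\phi\big|_{\phi=R}\,\partial_{\nu_\phi}u^\phi(Re_1,\cdot)\Big]\psi=-U_R''(R)\psi=-\kappa_R^{\mathrm{cyl}}\psi,
\]
by the definition of $\kappa_R^{\mathrm{cyl}}$.

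The only delicate point I expect is the bookkeeping of signs and normalisations. I would check three things carefully: that the metric unit normal $\nu_\phi$ really corresponds to the Euclidean unit normal $\mathbf n_\phi$, with no extra scaling, which is exactly \eqref{eq3.4}; that $\mathbf n_\phi$ points out of the fluid region, which is the $-x/|x|$ direction; and that $\psi$ displaces the boundary in the positive radial direction, as in \eqref{eq3.8}. As a sanity check, at $\phi\equiv R$ the formula gives $-U_R'(R)=Q_R^{\mathrm{cyl}}$, which matches $H(R,0)=0$.
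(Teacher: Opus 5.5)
Your proposal is correct and takes the same route as the paper. Both use the isometry identity \eqref{eq3.4} and the normal \eqref{eq3.5} to reach the closed form $-U_R'(\phi(t))/\sqrt{1+\phi'(t)^2}$, then differentiate at $\phi\equiv R$, where the normalization factor has zero first variation; your Nemytskii-operator and Banach-algebra argument for smoothness into $Y$ fills in a step the paper only asserts.
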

\begin{proof}
At the physical point $(\phi(t)e_1,t)$, the gradient of the radial profile is $(U_R'(\phi(t))e_1,0)$. By \eqref{eq3.4} and \eqref{eq3.5},
\[
\partial_{\nu_\phi}u^\phi(Re_1,t)
=-\frac{U_R'(\phi(t))}{\sqrt{1+\phi'(t)^2}}.
\]
This expression is smooth from a neighborhood of $R$ in $X$ to $Y$. Along $\phi_\varepsilon=R+\varepsilon\psi$, the physical normal is
\[
\mathbf n_{\phi_\varepsilon}(\phi_\varepsilon(t)e_1,t)
=\frac{(-e_1,\varepsilon\psi'(t))}{\sqrt{1+\varepsilon^2\psi'(t)^2}}
=(-e_1,0)+\varepsilon(0,\psi'(t))+O(\varepsilon^2).
\]
The remainder is taken in $C^{1,\alpha}$ as a function of $t$. Its first variation is tangential to the reference cylinder, so it has zero scalar product with the reference radial gradient. Equivalently,
\[
\left.\frac{d}{d\varepsilon}\right|_{\varepsilon=0}
\big(1+\varepsilon^2\psi'(t)^2\big)^{-1/2}=0.
\]
Consequently,
\[
\begin{aligned}
\left.\frac{d}{d\varepsilon}\right|_{\varepsilon=0}
\partial_{\nu_{\phi_\varepsilon}}u^{\phi_\varepsilon}(Re_1,t)
&=\left.\frac{d}{d\varepsilon}\right|_{\varepsilon=0}
\left[-U_R'(R+\varepsilon\psi(t))
\big(1+\varepsilon^2\psi'(t)^2\big)^{-1/2}\right]\\
&=-U_R''(R)\psi(t)
=-\kappa_R^{\mathrm{cyl}}\psi(t).
\end{aligned}
\]
This proves \eqref{eq3.11}, including the case $n=2$.
\end{proof}

\begin{theorem}\label{thm3.3}
The linearized operator $L_R=D_\varphi H(R,0):X\to Y$ is
\begin{equation}\label{eq3.19}
L_R\psi=Q_R^{\mathrm{cyl}}\partial_\nu w_{\psi,R}(Re_1,\cdot)
-\kappa_R^{\mathrm{cyl}}\psi,
\end{equation}
where $\nu=(-x/|x|,0)$ on $|x|=R$ and $w_{\psi,R}$ is the unique even, periodic solution of
\begin{equation}\label{eq3.20}
\left\{
\begin{array}{ll}
\Delta_xw_{\psi,R}+\partial_{tt}w_{\psi,R}=0 &\text{in }A_R,\\
w_{\psi,R}=\psi(t) &\text{on }|x|=R,\\
w_{\psi,R}=0 &\text{on }|x|=1.
\end{array}
\right.
\end{equation}
\end{theorem}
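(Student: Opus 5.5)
Split $u_\phi$ into the pulled-back radial profile and a correction that carries the boundary mismatch, and differentiate each piece separately. Set $v_\phi:=u_\phi-u^\phi$ on $A_R$, with $u^\phi=U_R\circ\Phi_\phi$ as in Lemma~\ref{lem3.2}. By \eqref{eq3.2} and \eqref{eq3.10}, $v_\phi$ solves
\[
\Delta_{g_\phi}v_\phi=0 \text{ in } A_R,\qquad v_\phi=0 \text{ on } |x|=1,\qquad v_\phi=1-U_R(\phi(t)) \text{ on } |x|=R.
\]
The outer condition holds because $\Phi_\phi$ is the identity near $|x|=1$ and $U_R(1)=0$. The Neumann trace is linear in the function, so
\[
H(R,\varphi)=\partial_{\nu_\phi}u^\phi(Re_1,\cdot)+\partial_{\nu_\phi}v_\phi(Re_1,\cdot)-Q_R^{\mathrm{cyl}}.
\]
Differentiating the first term in $\varphi$ at $\varphi=0$ gives $-\kappa_R^{\mathrm{cyl}}\psi$ by Lemma~\ref{lem3.2}. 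It remains to differentiate the second term.

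At $\phi\equiv R$ we have $v_R\equiv0$, since $U_R(R)=1$ and the solution of \eqref{eq3.2} is unique. Write
\[
\partial_{\nu_\phi}v_\phi=\langle \nabla_{g_\phi}v_\phi,\nu_\phi\rangle_{g_\phi}.
\]
Every term produced by differentiating the metric, the normal $\nu_\phi$, or $\nabla_{g_\phi}$ is multiplied by $v_R=0$ or one of its derivatives. Hence
\[
D_\varphi\big[\partial_{\nu_\phi}v_\phi\big]\psi=\partial_\nu \dot v,
\]
where $\dot v=\frac{d}{d\varepsilon}\big|_{\varepsilon=0}v_{R+\varepsilon\psi}$ and $\nu=(-x/|x|,0)$.

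To identify $\dot v$, differentiate the boundary value problem, which by Theorem~\ref{thm3.1} depends smoothly on $\phi$ with values in $C^{2,\alpha}$; the same fixed-domain argument applies to the data $1-U_R(\phi)$. The term $(D\Delta_{g_\phi})v_R$ vanishes, and $\Delta_{g_R}=\Delta_x+\partial_{tt}$ by \eqref{eq3.1}. So $\dot v$ satisfies
\[
(\Delta_x+\partial_{tt})\dot v=0 \text{ in } A_R,\qquad \dot v=0 \text{ on } |x|=1,\qquad \dot v=-U_R'(R)\psi=Q_R^{\mathrm{cyl}}\psi \text{ on } |x|=R.
\]
By uniqueness of even periodic solutions on the quotient $A_R/(2\pi\mathbb Z)$ (maximum principle), $\dot v=Q_R^{\mathrm{cyl}}w_{\psi,R}$. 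Adding the two contributions gives \eqref{eq3.19}.

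The delicate step is justifying that the Fréchet derivative of $\varphi\mapsto\partial_{\nu_\phi}v_\phi(Re_1,\cdot)\in Y$ equals $\partial_\nu\dot v$. This requires two things:

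\emph{Differentiability of $\phi\mapsto v_\phi$ into $C^{2,\alpha}(\overline{A_R})$.} The operator $\Delta_{g_\phi}$ has coefficients smooth in $\phi\in X$: they involve $\varphi,\varphi'$ through $g_\phi$, and $\varphi''$ only in lower-order terms, which are $C^{0,\alpha}$. The Dirichlet inverse is therefore smooth by the implicit function theorem on $C^{2,\alpha}\to C^{0,\alpha}\times C^{2,\alpha}(\partial)$. The boundary datum $1-U_R(R+\varphi)$ is smooth from $X$ into $C^{2,\alpha}$ because $U_R$ is smooth near $R$.

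\emph{The vanishing of the cross terms.} This follows from the product rule together with $v_R=0$.

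I expect the main care to lie in the sign bookkeeping. The displacement $\psi$ is in the $+r$ direction, while $\nu=-\partial_r$. This produces the inner datum $+Q_R^{\mathrm{cyl}}\psi$, since $U_R'(R)=-Q_R^{\mathrm{cyl}}$, and the $-\kappa_R^{\mathrm{cyl}}$ sign in Lemma~\ref{lem3.2}; both are consistent with the convention in \eqref{eq3.8}.
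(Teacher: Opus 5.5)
Your proposal is correct and is essentially the paper's proof. The paper uses $a_\phi=u^\phi-u_\phi=-v_\phi$, gets $b_{\psi,R}=-Q_R^{\mathrm{cyl}}w_{\psi,R}$ from the differentiated Dirichlet problem (all coefficient and normal variations vanish because $a_R\equiv0$), and combines this with Lemma~\ref{lem3.2} exactly as you do; your added remarks on the $C^{2,\alpha}$ smoothness of $\phi\mapsto v_\phi$ spell out a point the paper only attributes to Theorem~\ref{thm3.1}.
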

\begin{proof}
Let $a_\phi=u^\phi-u_\phi$. Subtracting the two equations \eqref{eq3.10} and \eqref{eq3.2} cancels their common constant right-hand side. The boundary values described above therefore give
\begin{equation}\label{eq3.21}
-\Delta_{g_\phi}a_\phi=0,\qquad
a_\phi=U_R(\phi(t))-1\ \text{on }|x|=R,\qquad
a_\phi=0\ \text{on }|x|=1.
\end{equation}
At $\phi\equiv R$, both $u^R$ and $u_R$ equal $U_R(|x|)$, so $a_R=0$ throughout $A_R$.

The smooth dependence in Theorem~\ref{thm3.1} allows us to differentiate $a_\phi$ in $C^{2,\alpha}(\overline{A_R})$. Define its material derivative by
\[
b_{\psi,R}=D_\phi a_\phi\big|_{\phi=R}[\psi]
=\left.\frac{d}{d\varepsilon}\right|_{\varepsilon=0}a_{\phi_\varepsilon}.
\]
This derivative is taken with the reference point $(x,t)$ fixed. In general it differs from the Eulerian derivative of the physical function by the transport term. Indeed, if $\widehat a_\phi=U_R-\widehat u_\phi$ and $a'_{\psi,R}$ denotes its derivative at a fixed interior physical point, then
\[
b_{\psi,R}=a'_{\psi,R}
+\nabla\widehat a_R\cdot\mathbf p_\psi.
\]
Here $\widehat a_R=0$, so the transport term vanishes and the two derivatives agree.

To differentiate the equation, put $\mathcal A_\varepsilon=-\Delta_{g_\varepsilon}$. These operators depend smoothly on $\varepsilon$ as bounded maps from $C^{2,\alpha}$ to $C^{0,\alpha}$ on the fixed quotient. Differentiating $\mathcal A_\varepsilon a_{\phi_\varepsilon}=0$ gives
\[
\mathcal A_0b_{\psi,R}
+\left.\frac{d\mathcal A_\varepsilon}{d\varepsilon}\right|_{\varepsilon=0}a_R=0.
\]
The second term is zero because $a_R$ is identically zero. Thus no variation of the coefficients contributes to the equation for $b_{\psi,R}$. By \eqref{eq3.1}, this equation is
\[
\Delta_xb_{\psi,R}+\partial_{tt}b_{\psi,R}=0\qquad\text{in }A_R.
\]
On the fixed inner boundary, differentiation of \eqref{eq3.21} yields
\[
b_{\psi,R}(R\theta,t)
=\left.\frac{d}{d\varepsilon}\right|_{\varepsilon=0}
\big(U_R(R+\varepsilon\psi(t))-1\big)
=U_R'(R)\psi(t)=-Q_R^{\mathrm{cyl}}\psi(t).
\]
The outer boundary value in \eqref{eq3.21} is zero for every $\varepsilon$, so $b_{\psi,R}=0$ on $|x|=1$. Comparison with \eqref{eq3.20} and uniqueness of the Dirichlet problem give
\[
b_{\psi,R}=-Q_R^{\mathrm{cyl}}w_{\psi,R}.
\]

It remains to differentiate the normal trace of $a_\phi$. A metric normal derivative can be written as $\partial_{\nu_\phi}a_\phi=da_\phi(\nu_\phi)$. Hence, at a fixed point of $|x|=R$,
\[
\left.\frac{d}{d\varepsilon}\right|_{\varepsilon=0}
\big(da_{\phi_\varepsilon}(\nu_{\phi_\varepsilon})\big)
=db_{\psi,R}(\nu)
+da_R\left(\left.\frac{d\nu_{\phi_\varepsilon}}{d\varepsilon}\right|_{\varepsilon=0}\right).
\]
The second term, which contains the variation of the pulled-back unit normal, vanishes because $da_R=0$. The variation of the metric is already included in the variation of $\nu_\phi$ in this directional-derivative formula. We obtain
\begin{equation}\label{eq3.24}
D_\phi\big|_{\phi=R}\big(\partial_{\nu_\phi}a_\phi\big)[\psi]
=\partial_\nu b_{\psi,R}.
\end{equation}

Since $u_\phi=u^\phi-a_\phi$, Lemma~\ref{lem3.2} and \eqref{eq3.24}, evaluated at $(Re_1,t)$, now give
\[
\begin{aligned}
L_R\psi
&=-\kappa_R^{\mathrm{cyl}}\psi
-\partial_\nu b_{\psi,R}(Re_1,\cdot)\\
&=Q_R^{\mathrm{cyl}}\partial_\nu w_{\psi,R}(Re_1,\cdot)
-\kappa_R^{\mathrm{cyl}}\psi.
\end{aligned}
\]
The parameter $R$ is held fixed in $D_\varphi H(R,0)$, so the term $Q_R^{\mathrm{cyl}}$ subtracted in \eqref{eq3.7} has no variation here. This proves \eqref{eq3.19}. Equivalently, if
\[
T_R\psi=\partial_\nu w_{\psi,R}(Re_1,\cdot),
\]
then $L_R=Q_R^{\mathrm{cyl}}T_R-\kappa_R^{\mathrm{cyl}}J$, where $J:X\hookrightarrow Y$ is the natural inclusion.
\end{proof}

\subsection{Spectral characteristics of the linearization}
We analyze $L_R$ by taking cosine projections in the periodic variable. For an even $2\pi$-periodic function $\psi$, we use the normalization
\[
\widehat\psi_0=\frac1{2\pi}\int_0^{2\pi}\psi(t)\,dt,
\qquad
\widehat\psi_k=\frac1\pi\int_0^{2\pi}\psi(t)\cos(kt)\,dt
\quad(k\geq1).
\]
The cosine functions are complete in the even subspace of $L^2(0,2\pi)$, and hence
\[
\psi(t)=\widehat\psi_0+\sum_{k=1}^{\infty}\widehat\psi_k\cos(kt)
\quad\text{in }L^2(0,2\pi).
\]
We retain the constant coefficient $\widehat\psi_0$. In the arguments below, we use the individual projections and $L^2$ completeness; we do not require convergence of this series in the H\"older norm.

For an integer $k\geq0$ and $\psi(t)=\cos(kt)$, uniqueness in \eqref{eq3.20} gives a solution of the form
\[
w_{\psi,R}(x,t)=p_k(|x|;R)\cos(kt).
\]
Indeed, writing $r=|x|$, we have
\[
\begin{aligned}
\Delta_x\bigl(p_k(r;R)\cos(kt)\bigr)
&=\left(p_k''+\frac{n-1}{r}p_k'\right)\cos(kt),\\
\partial_{tt}\bigl(p_k(r;R)\cos(kt)\bigr)
&=-k^2p_k\cos(kt).
\end{aligned}
\]
Thus \eqref{eq3.20} reduces to
\begin{equation}\label{eq:cyl-mode}
p_k''+\frac{n-1}{r}p_k'-k^2p_k=0,
\qquad p_k(R;R)=1,\quad p_k(1;R)=0.
\end{equation}
The term $-k^2p_k$ comes from the axial variable $t$ and has no factor $r^{-2}$. Since the outer normal to the fluid region on $r=R$ is $-\partial_r$, we define
\begin{equation}\label{eq:cyl-multiplier}
\mu_k(R)=-p_k'(R;R),\qquad
\Lambda_k(R,\gamma)=Q_R^{\mathrm{cyl}}\mu_k(R)-\kappa_R^{\mathrm{cyl}}.
\end{equation}
Then \eqref{eq3.19} gives
\[
L_R[\cos(kt)]=\Lambda_k(R,\gamma)\cos(kt).
\]
For a general $\psi\in X$, we can also project its harmonic extension in \eqref{eq3.20} directly. The projected function satisfies \eqref{eq:cyl-mode} with inner boundary value $\widehat\psi_k$, so it is $\widehat\psi_kp_k(r;R)$. Taking the inner normal trace therefore gives
\[
\widehat{L_R\psi}_k=\Lambda_k(R,\gamma)\widehat\psi_k
\quad(k\geq0).
\]
These projection identities follow from the $C^{2,\alpha}$ regularity of the extension and do not involve differentiating an infinite Fourier series.

We first solve the constant mode. When $k=0$, equation \eqref{eq:cyl-mode} reads $(r^{n-1}p_0')'=0$. Integrating once gives $p_0'=Cr^{1-n}$. A second integration gives a linear combination of $1$ and $r^{2-n}$ for $n>2$, and of $1$ and $\log r$ for $n=2$. Imposing the two boundary values yields
\[
p_0(r;R)=
\begin{cases}
\dfrac{r^{2-n}-1}{R^{2-n}-1},&n\geq3,\\[1ex]
\dfrac{\log r}{\log R},&n=2,
\end{cases}
\qquad
\mu_0(R)=
\begin{cases}
\dfrac{n-2}{R(1-R^{n-2})},&n\geq3,\\[1ex]
\dfrac1{R\log(1/R)},&n=2.
\end{cases}
\]
In both cases $p_0$ decreases from $1$ to $0$, and $\mu_0(R)>0$.

For $k\geq1$, we put $\beta=(n-2)/2$, so that $n-1=2\beta+1$, and make the substitution
\[
s=kr,\qquad p(r)=r^{-\beta}z(s).
\]
Here primes on $z$ denote derivatives with respect to $s$. Direct calculation gives
\[
\begin{aligned}
p'(r)&=r^{-\beta-1}\bigl(-\beta z+s z'\bigr),\\
p''(r)&=r^{-\beta-2}\bigl(\beta(\beta+1)z-2\beta s z'+s^2z''\bigr).
\end{aligned}
\]
Consequently,
\[
p''+\frac{n-1}{r}p'-k^2p
=r^{-\beta-2}\bigl(s^2z''+s z'-(s^2+\beta^2)z\bigr).
\]
The equation for $z$ is therefore the modified Bessel equation
\[
s^2z''+s z'-(s^2+\beta^2)z=0.
\]
Its independent solutions on $s>0$ are the modified Bessel functions $I_\beta$ and $K_\beta$. Thus every solution of the radial equation is a linear combination of $r^{-\beta}I_\beta(kr)$ and $r^{-\beta}K_\beta(kr)$. The condition at $r=1$ selects the combination below, and division by its value at $r=R$ imposes the inner boundary condition:
\begin{equation}\label{eq:cyl-bessel}
Z_k(r)=r^{-\beta}\big(K_\beta(kr)I_\beta(k)-I_\beta(kr)K_\beta(k)\big),\qquad
p_k(r;R)=\frac{Z_k(r)}{Z_k(R)}.
\end{equation}

We verify that this division is well defined and record the sign of the derivative. For $s>0$, the Wronskian identity is
\[
I_\beta(s)K_\beta'(s)-I_\beta'(s)K_\beta(s)=-\frac1s;
\]
see \cite[Section 10.28]{DLMF}. Since $I_\beta(s)>0$ for $\beta\geq0$, it follows that
\[
\frac{d}{ds}\left(\frac{K_\beta(s)}{I_\beta(s)}\right)
=-\frac1{s I_\beta(s)^2}<0.
\]
For $0<r<1$, we have $kr<k$, and hence
\[
\begin{aligned}
Z_k(r)
&=r^{-\beta}I_\beta(kr)I_\beta(k)
\left(\frac{K_\beta(kr)}{I_\beta(kr)}
-\frac{K_\beta(k)}{I_\beta(k)}\right)>0.
\end{aligned}
\]
Thus $Z_k$ has no zero in $(0,1)$, while $Z_k(1)=0$. Differentiating \eqref{eq:cyl-bessel} at $r=1$, the derivative of $r^{-\beta}$ contributes zero because the bracket vanishes there. The Wronskian gives
\[
Z_k'(1)
=k\bigl(K_\beta'(k)I_\beta(k)-I_\beta'(k)K_\beta(k)\bigr)=-1.
\]
The zero at $1$ is therefore simple. In particular, \eqref{eq:cyl-bessel} satisfies both boundary values in \eqref{eq:cyl-mode}. The maximum principle applied on $(R,1)$ also gives $0<p_k<1$.

The derivative recurrences for the modified Bessel functions give
\[
\frac{d}{dr}\bigl(r^{-\beta}I_\beta(kr)\bigr)
=k r^{-\beta}I_{\beta+1}(kr),\qquad
\frac{d}{dr}\bigl(r^{-\beta}K_\beta(kr)\bigr)
=-k r^{-\beta}K_{\beta+1}(kr);
\]
see \cite[Section 10.29]{DLMF}. Therefore
\[
Z_k'(r)=-k r^{-\beta}
\bigl(K_{\beta+1}(kr)I_\beta(k)+I_{\beta+1}(kr)K_\beta(k)\bigr)<0,
\]
and the multipliers have the explicit expression
\[
\mu_k(R)
=k\frac{K_{\beta+1}(kR)I_\beta(k)+I_{\beta+1}(kR)K_\beta(k)}
{K_\beta(kR)I_\beta(k)-I_\beta(kR)K_\beta(k)}
\quad(k\geq1).
\]
The denominator is positive by the preceding ratio argument. These formulas include $n=2$, for which $\beta=0$ and the derivative recurrences involve $I_1$ and $K_1$.

We can also describe the behavior at high frequencies for a fixed $R\in(0,1)$. The large-argument expansions in \cite[Section 10.40]{DLMF} give, with the order $\nu$ fixed,
\[
I_\nu(s)=\frac{e^s}{\sqrt{2\pi s}}\bigl(1+O(s^{-1})\bigr),\qquad
K_\nu(s)=\sqrt{\frac\pi{2s}}e^{-s}
\left(1+\frac{4\nu^2-1}{8s}+O(s^{-2})\right).
\]
To see the effect of the outer boundary, write
\[
q_k=\frac{K_\beta(k)}{I_\beta(k)},\qquad
\theta_k(R)=q_k\frac{I_\beta(kR)}{K_\beta(kR)}\in(0,1).
\]
Dividing the formula for $\mu_k$ by $I_\beta(k)$ and using the Wronskian gives the exact identity
\[
\mu_k(R)
=k\frac{K_{\beta+1}(kR)}{K_\beta(kR)}
+\frac{q_k}{R K_\beta(kR)^2(1-\theta_k(R))}.
\]
For fixed $R$, the expansions imply
\[
\theta_k(R)=O\bigl(e^{-2k(1-R)}\bigr),\qquad
\frac{q_k}{R K_\beta(kR)^2(1-\theta_k(R))}
=O\bigl(k e^{-2k(1-R)}\bigr),
\]
and
\[
\frac{K_{\beta+1}(kR)}{K_\beta(kR)}
=1+\frac{2\beta+1}{2kR}+O(k^{-2}).
\]
We obtain
\[
\mu_k(R)=k+\frac{n-1}{2R}+O(k^{-1})
\quad\text{as }k\to\infty.
\]
The constants here may depend on $R$ and $n$. This asymptotic describes the first-order growth of the Dirichlet-to-Neumann multipliers. We establish the required isomorphism between the H\"older spaces by the mixed boundary problem below.

\begin{theorem}\label{thm3.4}
For every $n\geq2$, there is $\gamma_{\mathrm{cyl}}(n)>0$ and a smooth radius $R_*=R_*(n,\gamma)\in(0,1)$ for $|\gamma|<\gamma_{\mathrm{cyl}}(n)$ such that
\begin{equation}\label{eq:cyl-simple-zero}
\Lambda_1(R_*,\gamma)=0,\qquad
\partial_R\Lambda_1(R_*,\gamma)\ne0,\qquad
Q_{R_*}^{\mathrm{cyl}}>0.
\end{equation}
Moreover, $\Lambda_0(R_*,\gamma)<0$ and $\Lambda_k(R_*,\gamma)>0$ for all $k\geq2$.
\end{theorem}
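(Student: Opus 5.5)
The plan is to reduce to $\gamma=0$ and then perturb. Using $\kappa_R^{\mathrm{cyl}}=\frac{n-1}{R}Q_R^{\mathrm{cyl}}-\gamma$, the multipliers in \eqref{eq:cyl-multiplier} can be rewritten as
\[
\Lambda_k(R,\gamma)=Q_R^{\mathrm{cyl}}\Bigl(\mu_k(R)-\frac{n-1}{R}\Bigr)+\gamma .
\]
Here $\mu_k$ does not depend on $\gamma$, while $Q_R^{\mathrm{cyl}}$ depends smoothly on $(R,\gamma)$ and is positive for $\gamma=0$. So the whole statement reduces to the sign structure of $F_k(R):=R\mu_k(R)-(n-1)$ on $(0,1)$. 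We first produce a simple zero $R_0$ of $F_1$ at $\gamma=0$. We then apply the implicit function theorem to $\Lambda_1(R,\gamma)=0$ near $(R_0,0)$ to obtain the smooth branch $R_*(n,\gamma)$. Positivity $Q_{R_*}^{\mathrm{cyl}}>0$ then follows by continuity.

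\emph{Existence and simplicity of the zero of $F_1$ (main step).} Set $m_k(r)=-rZ_k'(r)/Z_k(r)$ on $(0,1)$, so that $R\mu_k(R)=m_k(R)$. From $(r^{n-1}Z_k')'=k^2r^{n-1}Z_k$ one derives the Riccati equation
\[
m_k'=\frac{m_k^2-(n-2)m_k}{r}-k^2r .
\]
At any point where $m_1=n-1$, this gives $m_1'=\frac{n-1}{r}-r>0$, because $r<1\le\sqrt{n-1}$. Hence every zero of $F_1$ is a transversal upward crossing, so there is at most one zero, and it is simple; this simplicity is what gives $\partial_R\Lambda_1\neq0$.

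For existence we use the endpoint behaviour. As $R\to1^-$, $Z_1(R)\to0$ while $Z_1'(1)=-1$, so $m_1(R)\to+\infty$. As $R\to0^+$, the small-argument behaviour of $I_\beta,K_\beta$ (equivalently, the fact that the $k^2$ term is negligible at small scales) gives $m_1(R)\to n-2$ for $n\ge3$, and $m_1(R)\to0$ for $n=2$. In both cases $F_1(0^+)<0$. I expect this endpoint computation, especially the logarithmic case $n=2$, to be the delicate bookkeeping point, and I would do it directly from the explicit Bessel formula for $\mu_k$.

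\emph{Other modes.} Next I show $\mu_0<\mu_1<\mu_2<\cdots$ with a uniform gap, namely $\inf_{k\ge2}(\mu_k(R)-\mu_1(R))>0$ at fixed $R$. Monotonicity comes from comparison: for $j<k$, the difference $p_j-p_k$ vanishes at $r=R$ and $r=1$ and satisfies
\[
(\partial_{rr}+\tfrac{n-1}{r}\partial_r-k^2)(p_j-p_k)=(j^2-k^2)p_j<0 .
\]
The maximum principle gives $p_j>p_k$ in $(R,1)$, and the Hopf lemma at $r=R$ gives $-p_k'(R)>-p_j'(R)$ strictly. Uniformity of the gap follows from the asymptotics $\mu_k(R)=k+\frac{n-1}{2R}+O(k^{-1})$: only finitely many $k$ need checking, and each of these has a strict inequality.

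At $\gamma=0$ and $R=R_0$ this gives $\Lambda_0<0$ and $\Lambda_k\ge c>0$ for all $k\ge2$, with $c$ independent of $k$. For small $|\gamma|$, the quantities $R_*$ and $Q_{R_*}^{\mathrm{cyl}}$ vary continuously and the additive $\gamma$ is uniformly small. Shrinking $\gamma_{\mathrm{cyl}}(n)$ therefore preserves both signs and $\partial_R\Lambda_1(R_*,\gamma)\neq0$, which completes the argument.
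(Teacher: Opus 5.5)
Your argument is correct, but it takes a different route from the paper's in two places.

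For the zero of $\Lambda_1(\cdot,0)$, you evaluate the Riccati equation only on the level set $m_1=n-1$. This shows that every crossing is upward, so once the endpoint limits are known (these are the same Bessel small-argument computations the paper carries out) you get a unique simple zero. The paper instead proves in Lemma~\ref{lem:cyl-radius} that $M_k=R\mu_k$ is strictly increasing on all of $(0,1)$. It does this with a barrier argument against the positive root $c(r)$ of the Riccati right-hand side, and uses the level-set evaluation only to obtain $M_1'(R_0)>0$. Your version is more economical, since global monotonicity is never actually needed.

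For the signs of the other multipliers, you perturb from $\gamma=0$. The paper instead observes that $\kappa_R^{\mathrm{cyl}}$ does not depend on $k$, so $\Lambda_k-\Lambda_1=Q_R^{\mathrm{cyl}}(\mu_k-\mu_1)$ holds identically. At the root this gives $\Lambda_k(R_*,\gamma)=Q_{R_*}^{\mathrm{cyl}}\bigl(\mu_k(R_*)-\mu_1(R_*)\bigr)$. The signs then follow from the strict ordering of the $\mu_k$ for every admissible $\gamma$ at once, with no further shrinking of $\gamma_{\mathrm{cyl}}(n)$ and no uniformity issue. The paper proves that ordering with a Lagrange identity rather than your maximum principle/Hopf argument, but the two are equivalent.

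Your perturbative step is valid but slightly under-justified as written. Continuity of $R_*$ and $Q_{R_*}^{\mathrm{cyl}}$ is not enough by itself, because $\mu_k(R_*(\gamma))$ also moves with $\gamma$. You therefore need a lower bound that is uniform in $R$ near $R_0$ as well as in $k$. This follows at once from your own monotonicity: $\mu_k(R)-\frac{n-1}{R}\geq\mu_2(R)-\frac{n-1}{R}$ for $k\geq2$, and the right side is continuous in $R$ and positive at $R_0$. For the same reason, your appeal to the large-$k$ asymptotics is superfluous, since strict monotonicity in $k$ already gives $\inf_{k\geq2}(\mu_k-\mu_1)=\mu_2-\mu_1>0$.
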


\begin{lemma}\label{lem:cyl-modes}
For every $R\in(0,1)$ and $0\leq k<\ell$, $\mu_\ell(R)>\mu_k(R)$.
\end{lemma}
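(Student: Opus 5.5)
Throughout, $R$ is fixed and $k$ is treated as a continuous parameter. The strategy is to show that $\mu_k(R)=-p_k'(R;R)$ is strictly increasing in $k$, using a comparison (maximum-principle) argument on the radial problem \eqref{eq:cyl-mode}, and to avoid any monotonicity properties of Bessel function ratios. Write $L_k p=p''+\frac{n-1}{r}p'-k^2p$, and for $0\le k<\ell$ set $d=p_k-p_\ell$ on $[R,1]$. Then $d(R)=d(1)=0$, and
\[
L_\ell d=L_\ell p_k=L_kp_k-(\ell^2-k^2)p_k=-(\ell^2-k^2)p_k .
\]

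We first need $p_k>0$ on $(R,1)$. For $k\ge1$ this was shown above ($0<p_k<1$). For $k=0$ it follows from the explicit formula: $p_0$ decreases from $1$ to $0$, so $p_0>0$ on the open interval. Hence $L_\ell d<0$ on $(R,1)$.

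Because the zeroth-order coefficient $-\ell^2$ is nonpositive, the strong maximum principle applies to $L_\ell$. Since $d$ vanishes at both endpoints and is strictly supersolution-like, it cannot have a nonpositive interior minimum unless it is constant, and a constant is impossible since $L_\ell d\neq0$. Therefore $d>0$ on $(R,1)$. The Hopf boundary lemma at the endpoint $r=R$ then gives $d'(R)>0$. Alternatively, one can argue directly: if $d'(R)=0$, then together with $d(R)=0$ and $d''(R)=-(\ell^2-k^2)p_k(R)=-(\ell^2-k^2)<0$, the function $d$ would be negative just to the right of $R$, a contradiction. So $d'(R)>0$, which says $p_k'(R)>p_\ell'(R)$, i.e. $\mu_\ell(R)=-p_\ell'(R)>-p_k'(R)=\mu_k(R)$.

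The only delicate point is the case $k=0$: its solution is the logarithmic or power profile rather than a Bessel combination, but the comparison argument only uses positivity of $p_0$, which is explicit. The elementary ODE version of Hopf used above (via $d''(R)<0$) removes any dependence on citing the general lemma. No Bessel asymptotics are needed.
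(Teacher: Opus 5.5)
Your proof is correct, but it takes a different route from the paper's.

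The paper uses a Wronskian (Lagrange--Green) identity. It writes both equations in divergence form $(r^{n-1}p_j')'=j^2r^{n-1}p_j$, cross-multiplies, and integrates over $(R,1)$. This gives the exact formula
\[
R^{n-1}\bigl(\mu_\ell(R)-\mu_k(R)\bigr)=(\ell^2-k^2)\int_R^1 r^{n-1}p_kp_\ell\,dr ,
\]
which is positive because $p_k,p_\ell>0$.

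You instead compare the two profiles pointwise. The function $d=p_k-p_\ell$ is a strict supersolution of $L_\ell$ with zero boundary values, so $d>0$ on $(R,1)$. In one dimension you do not even need the strong maximum principle for this: at an interior minimum $r_0$ with $d(r_0)\le 0$ one would have $L_\ell d(r_0)=d''(r_0)-\ell^2d(r_0)\ge 0$, which is impossible. Your boundary-point step then gives $d'(R)>0$. The elementary version works because $d>0$ near $R^+$ forces $d'(R)\ge0$, and $d'(R)=0$ would give $d''(R)=-(\ell^2-k^2)<0$.

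What each route buys:
\begin{itemize}
\item The paper's identity is a one-line computation and quantifies the gap between the multipliers. It uses positivity of both $p_k$ and $p_\ell$ and relies on the self-adjoint (divergence-form) structure.
\item Your argument needs positivity of the lower mode $p_k$ only. It also yields the extra ordering $p_\ell<p_k$ on $(R,1)$, but it gives no explicit size for the gap $\mu_\ell-\mu_k$.
\end{itemize}
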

\begin{proof}
We compare the normalized solutions $p_k$ and $p_\ell$ at the same radius $R$. The maximum principle gives $0<p_j<1$ on $(R,1)$ for $j=k,\ell$, including $j=0$. In divergence form, their equations are
\[
(r^{n-1}p_j')'=j^2r^{n-1}p_j.
\]
Multiplying the equation for $p_\ell$ by $p_k$, multiplying the equation for $p_k$ by $p_\ell$, and subtracting, we obtain
\[
\frac{d}{dr}\left[r^{n-1}(p_\ell'p_k-p_k'p_\ell)\right]
=(\ell^2-k^2)r^{n-1}p_kp_\ell.
\]
At $r=1$, the expression in brackets vanishes because both functions are zero. At $r=R$, both are $1$, so this expression equals $R^{n-1}(\mu_k-\mu_\ell)$. Integration from $R$ to $1$ therefore yields
\[
R^{n-1}\big(\mu_\ell(R)-\mu_k(R)\big)
=(\ell^2-k^2)\int_R^1r^{n-1}p_kp_\ell\,dr>0.
\]
This proves the strict comparison for every pair of modes, including the constant mode.
\end{proof}

\begin{lemma}\label{lem:cyl-radius}
For each $k\geq1$, $M_k(R):=R\mu_k(R)$ is strictly increasing on $(0,1)$, and
\[
\lim_{R\to0^+}M_k(R)=n-2,\qquad
\lim_{R\to1^-}M_k(R)=+\infty.
\]
The first limit equals zero when $n=2$.
\end{lemma}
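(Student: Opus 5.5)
The plan is to remove the $R$-dependence from the boundary value problem. By \eqref{eq:cyl-bessel}, $p_k(r;R)=Z_k(r)/Z_k(R)$, where $Z_k$ does not depend on $R$. Hence
\[
M_k(R)=R\mu_k(R)=-\frac{RZ_k'(R)}{Z_k(R)}=:y(R),
\]
where $y(r)=-rZ_k'(r)/Z_k(r)$ is a single function on $(0,1)$. It is well defined and positive there, since we showed $Z_k>0$ and $Z_k'<0$ on $(0,1)$. So monotonicity in $R$ reduces to monotonicity of $y$ in $r$, and I will obtain it from a Riccati equation.

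\emph{Riccati equation.} Substitute $Z_k'=-yZ_k/r$ into $Z_k''+\frac{n-1}{r}Z_k'-k^2Z_k=0$. A direct computation gives
\[
ry'=y\bigl(y-(n-2)\bigr)-k^2r^2=:F(r).
\]
It therefore suffices to show $F>0$ on $(0,1)$.

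\emph{Sign of $F$.} Near $r=1$ we have $Z_k(r)\to0$ and $Z_k'(1)=-1$, so $y\to+\infty$ and $F>0$ near $1$. Suppose $F(r_0)=0$ for some $r_0\in(0,1)$. Then $y'(r_0)=F(r_0)/r_0=0$, and differentiating $F$ gives
\[
F'(r_0)=\bigl(2y(r_0)-(n-2)\bigr)y'(r_0)-2k^2r_0=-2k^2r_0<0.
\]
So at every zero, $F$ passes from positive to negative values. Let $r_0$ be the largest zero; it exists because $F>0$ on a neighbourhood of $1$ and zeros cannot accumulate at a point of $(0,1)$ where $F'\neq0$. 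Then $F<0$ just to the right of $r_0$. Since $F>0$ near $1$, $F$ must have another zero in $(r_0,1)$ at which it increases, which contradicts both the maximality of $r_0$ and the sign of $F'$ at zeros. Hence $F$ has no zeros, $F>0$ on $(0,1)$, and $y'>0$. This proves that $M_k$ is strictly increasing. The same computation also yields $\lim_{R\to1^-}M_k(R)=+\infty$.

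\emph{Limit as $R\to0^+$.} Here I use the explicit expression
\[
M_k(R)=kR\,\frac{K_{\beta+1}(kR)I_\beta(k)+I_{\beta+1}(kR)K_\beta(k)}{K_\beta(kR)I_\beta(k)-I_\beta(kR)K_\beta(k)}
\]
together with the small-argument asymptotics from \cite{DLMF}: $I_\nu(s)\sim (s/2)^\nu/\Gamma(\nu+1)$; $K_\nu(s)\sim\tfrac12\Gamma(\nu)(s/2)^{-\nu}$ for $\nu>0$; and $K_0(s)\sim\log(1/s)$. The terms containing $I_\beta(kR)$ and $I_{\beta+1}(kR)$ are of lower order, so $M_k(R)\sim sK_{\beta+1}(s)/K_\beta(s)$ with $s=kR\to0$.
\begin{itemize}
\item For $\beta>0$, this ratio tends to $\frac{2\Gamma(\beta+1)}{\Gamma(\beta)}=2\beta=n-2$.
\item For $\beta=0$, i.e.\ $n=2$, we have $sK_1(s)\to1$ while $K_0(s)\to\infty$, so the limit is $0=n-2$.
\end{itemize}

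\emph{Main obstacle.} I expect the delicate step to be the sign argument for $F$, namely making sure the zero-crossing argument is airtight. I would double-check that no zero of $F$ can be tangential, which holds because $F'(r_0)=-2k^2r_0\neq0$ always. The asymptotics are routine bookkeeping once the lower-order $I$-terms are shown to be negligible, using $I_\beta(kR)K_{\beta+1}(kR)$-type products being $O(1)$ against a divergent $K_\beta(kR)$ when $\beta>0$. The log case $n=2$ must be treated separately.
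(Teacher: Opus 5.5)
Your proposal is correct and is essentially the paper's proof. It uses the same reduction $M_k(R)=y(R)$ with $y=-rZ_k'/Z_k$, the same Riccati equation $ry'=y^2-(n-2)y-k^2r^2$, and the same largest-contact-point contradiction. You track the sign of $F$ directly, which is equivalent to the paper's comparison of $y$ with the positive root $c(r)$, since $y>0$ and the other root is negative. The limit at $0$ comes from the same small-argument Bessel asymptotics.

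One slip in your closing remark: the product $I_\beta(kR)K_{\beta+1}(kR)$ behaves like $1/(kR)$, not $O(1)$. It never enters, though. The terms you actually discard are $kR\,I_{\beta+1}(kR)K_\beta(k)\to0$ and $I_\beta(kR)K_\beta(k)=O(1)$. These are negligible against $kR\,K_{\beta+1}(kR)I_\beta(k)$ and the divergent $K_\beta(kR)I_\beta(k)$, for both $\beta>0$ and $\beta=0$.
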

\begin{proof}
We fix $k\geq1$. The function $Z_k$ in \eqref{eq:cyl-bessel} is independent of the inner radius $R$, so
\[
\mu_k(R)=-\frac{Z_k'(R)}{Z_k(R)}.
\]
We already know that $Z_k>0$ and $Z_k'<0$ on $(0,1)$. The derivative sign can also be seen directly from its equation:
\[
(r^{n-1}Z_k')'=k^2r^{n-1}Z_k>0,\qquad
r^{n-1}Z_k'(r)
=-1-k^2\int_r^1s^{n-1}Z_k(s)\,ds<0.
\]
We set
\[
y(r)=-\frac{rZ_k'(r)}{Z_k(r)}>0,
\qquad M_k(R)=y(R).
\]
Differentiating this quotient and using $Z_k''=k^2Z_k-(n-1)Z_k'/r$, we find
\[
\begin{aligned}
ry'
&=-\frac{rZ_k'}{Z_k}-\frac{r^2Z_k''}{Z_k}
+r^2\left(\frac{Z_k'}{Z_k}\right)^2\\
&=y+(n-1)\frac{rZ_k'}{Z_k}-k^2r^2+y^2.
\end{aligned}
\]
Thus $y$ satisfies the Riccati equation
\begin{equation}\label{eq:cyl-riccati}
ry'=y^2-(n-2)y-k^2r^2.
\end{equation}
The positive root of the polynomial on the right is
\[
c(r)=\frac{n-2+\sqrt{(n-2)^2+4k^2r^2}}2,
\qquad
c'(r)=\frac{2k^2r}{\sqrt{(n-2)^2+4k^2r^2}}>0.
\]
The other root is negative for $r>0$. Hence, because $y>0$, the right-hand side of \eqref{eq:cyl-riccati} is positive exactly when $y>c$. For $n=2$ we have $c(r)=kr$ and $c'(r)=k>0$.

We next prove that $y(r)>c(r)$ for all $0<r<1$. Since $Z_k(1)=0$ and $Z_k'(1)=-1$, Taylor expansion at $1$ gives
\[
Z_k(r)=(1-r)+O((1-r)^2),\qquad
Z_k'(r)=-1+O(1-r).
\]
It follows that $y(r)\to+\infty$ as $r\to1^-$, whereas $c(r)$ remains bounded there. Thus $y-c>0$ on a neighborhood of $1$. If the assertion fails, choose $a\in(0,1)$ with $y(a)-c(a)\leq0$ and then $b\in(a,1)$ such that $y-c>0$ on $[b,1)$. Continuity gives a nonempty closed set of contact points in $[a,b]$. Let $r_0$ be its largest point. There are no contact points to its right, and the positivity near $1$ gives $y-c>0$ on $(r_0,1)$. Therefore
\[
(y-c)'(r_0)\geq0.
\]
But $y(r_0)=c(r_0)$ makes the right-hand side of \eqref{eq:cyl-riccati} zero, so $y'(r_0)=0$ and
\[
(y-c)'(r_0)=-c'(r_0)<0,
\]
a contradiction. We conclude that $y>c$ and hence $y'>0$ on $(0,1)$. This proves the strict increase of $M_k$.

We now check its limit at zero. When $n>2$, we have $\beta>0$, and the small-argument forms in \cite[Section 10.30]{DLMF} are
\[
I_\beta(s)\sim\frac{s^\beta}{2^\beta\Gamma(\beta+1)},
\qquad
K_\beta(s)\sim2^{\beta-1}\Gamma(\beta)s^{-\beta}.
\]
In \eqref{eq:cyl-bessel}, the $K_\beta(kr)$ term is the leading term. Thus
\[
Z_k(r)\sim C_k r^{-2\beta}=C_k r^{2-n},
\qquad
C_k=2^{\beta-1}\Gamma(\beta)k^{-\beta}I_\beta(k)>0.
\]
To obtain the derivative asymptotic, we use the exact derivative formula above, together with
\[
K_{\beta+1}(s)\sim2^\beta\Gamma(\beta+1)s^{-\beta-1},
\qquad
I_{\beta+1}(s)\sim\frac{s^{\beta+1}}{2^{\beta+1}\Gamma(\beta+2)}.
\]
This gives
\[
Z_k'(r)\sim-2\beta C_k r^{-2\beta-1}
=-(n-2)C_k r^{1-n},
\qquad
-\frac{rZ_k'(r)}{Z_k(r)}\longrightarrow n-2.
\]
In this step, the derivative asymptotic follows from the Bessel derivative identities, rather than from differentiating an asymptotic equivalence.

For $n=2$, we have $\beta=0$. The corresponding forms are
\[
I_0(s)\sim1,\qquad K_0(s)\sim\log(1/s),
\qquad I_1(s)\sim\frac{s}{2},\qquad K_1(s)\sim\frac1s.
\]
Using \eqref{eq:cyl-bessel} and its derivative formula with $\beta=0$, we obtain
\[
Z_k(r)\sim I_0(k)\log(1/r),\qquad
Z_k'(r)\sim-\frac{I_0(k)}r,
\qquad
-\frac{rZ_k'(r)}{Z_k(r)}\sim\frac1{\log(1/r)}\longrightarrow0.
\]
This establishes the zero-end limit also in dimension $n=2$. Finally, the Taylor expansion at $1$ gives $y(r)\sim r/(1-r)$, so the other limit is $+\infty$.
\end{proof}

\begin{proof}[Proof of Theorem~\ref{thm3.4}]
We first take $\gamma=0$. Then $U_R=p_0(\cdot;R)$, so $Q_R^{\mathrm{cyl}}=\mu_0(R)>0$, and
\[
\frac{\kappa_R^{\mathrm{cyl}}}{Q_R^{\mathrm{cyl}}}
=\frac{n-1}{R},\qquad
\Lambda_1(R,0)=\frac{Q_R^{\mathrm{cyl}}}{R}\big(M_1(R)-(n-1)\big).
\]
By Lemma~\ref{lem:cyl-radius}, $M_1$ increases strictly from $n-2$ to $+\infty$. Since $n-2<n-1$, there is exactly one $R_0\in(0,1)$ with $M_1(R_0)=n-1$. Substitution of this value into \eqref{eq:cyl-riccati}, with $k=1$, gives
\[
M_1'(R_0)
=\frac{(n-1)^2-(n-2)(n-1)-R_0^2}{R_0}
=\frac{n-1-R_0^2}{R_0}>0.
\]
The last inequality follows from $n\geq2$ and $R_0<1$; in particular, it is strict when $n=2$. When we differentiate the formula for $\Lambda_1(R,0)$, the derivative of $Q_R^{\mathrm{cyl}}/R$ is multiplied by $M_1(R)-(n-1)$ and therefore contributes zero at the root. Consequently,
\[
\partial_R\Lambda_1(R_0,0)
=\frac{Q_{R_0}^{\mathrm{cyl}}}{R_0}M_1'(R_0)>0.
\]

The Bessel quotient is smooth for $0<R<1$, and the formulas in \eqref{eq3.6} show that $Q_R^{\mathrm{cyl}}$ and $\kappa_R^{\mathrm{cyl}}$ are smooth in $(R,\gamma)$. Thus $(R,\gamma)\mapsto\Lambda_1(R,\gamma)$ is smooth near $(R_0,0)$. The implicit function theorem gives a smooth root $R_*=R_*(n,\gamma)$ near $R_0$ for sufficiently small $|\gamma|$. We shrink $\gamma_{\mathrm{cyl}}(n)>0$ so that $R_*\in(0,1)$, $Q_{R_*}^{\mathrm{cyl}}>0$, and $\partial_R\Lambda_1(R_*,\gamma)>0$ throughout this interval.

At the root, the equation $\Lambda_1(R_*,\gamma)=0$ is exactly $\kappa_{R_*}^{\mathrm{cyl}}=Q_{R_*}^{\mathrm{cyl}}\mu_1(R_*)$. Hence for every $k\geq0$,
\[
\Lambda_k(R_*,\gamma)
=Q_{R_*}^{\mathrm{cyl}}\big(\mu_k(R_*)-\mu_1(R_*)\big).
\]
Lemma~\ref{lem:cyl-modes} gives $\mu_0<\mu_1$ and $\mu_k>\mu_1$ for every $k\geq2$. Since $Q_{R_*}^{\mathrm{cyl}}>0$, these inequalities prove the stated signs, including the negative constant multiplier. This exact comparison establishes nonresonance for all modes at once, with the same interval of allowed $\gamma$.
\end{proof}

\begin{lemma}\label{lem:cyl-fredholm}
At $R_*$, $L_{R_*}:X\to Y$ is Fredholm of index zero, with
\[
\ker L_{R_*}=\operatorname{span}\{\cos t\},\qquad
\operatorname{Range}L_{R_*}
=\left\{h\in Y:\int_0^{2\pi}h(t)\cos t\,dt=0\right\}.
\]
Moreover,
\[
D_R D_\varphi H(R_*,0)[\cos t]
\notin\operatorname{Range}L_{R_*}.
\]
\end{lemma}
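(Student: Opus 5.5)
The plan is to follow the bounded case. I will first write $L_{R_*}=Q_{R_*}^{\mathrm{cyl}}T_{R_*}-\kappa_{R_*}^{\mathrm{cyl}}J$ as in Theorem~\ref{thm3.3}, and then show that $T_{R_*}:X\to Y$ is an isomorphism. Since $J:X\hookrightarrow Y$ is compact on the periodic quotient, this makes $L_{R_*}$ Fredholm of index zero.

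For the isomorphism property of $T_{R_*}$, I will use the mixed problem on the compact quotient $A_{R_*}/(2\pi\mathbb Z)$. The equation is $\Delta_xw+\partial_{tt}w=0$, with $w=0$ on $|x|=1$ and $\partial_\nu w=h$ on $|x|=R_*$, for even periodic $h\in Y$. The Dirichlet condition on the outer component removes the constant ambiguity. The two boundary components are disjoint, so Schauder theory for the oblique/Neumann problem gives a unique $C^{2,\alpha}$ solution with estimates. Uniqueness of this solution and the symmetry of the problem show that $w$ is radial in $x$ and even in $t$. Its inner trace then defines $T_{R_*}^{-1}h\in X$, and the bounded inverse follows from the estimates. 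Hence $L_{R_*}=Q_{R_*}^{\mathrm{cyl}}(T_{R_*}-cJ)$ with $Q_{R_*}^{\mathrm{cyl}}>0$, which is a compact perturbation of an isomorphism.

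Next I compute the kernel and range with the cosine projections $\widehat{L_{R_*}\psi}_k=\Lambda_k(R_*,\gamma)\widehat\psi_k$.
\begin{itemize}
\item \emph{Kernel.} If $L_{R_*}\psi=0$, then $\Lambda_k\widehat\psi_k=0$ for all $k$. Theorem~\ref{thm3.4} gives $\Lambda_k\neq0$ for $k\neq1$, so every coefficient except $\widehat\psi_1$ vanishes. $L^2$ completeness then gives $\psi=\widehat\psi_1\cos t$, and $\cos t$ indeed lies in the kernel since $\Lambda_1(R_*,\gamma)=0$.
\item \emph{Range.} For any $\psi$, $\widehat{(L_{R_*}\psi)}_1=\Lambda_1\widehat\psi_1=0$, so the range lies in the closed subspace $\{h:\int_0^{2\pi} h\cos t\,dt=0\}$. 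This subspace has codimension one. Index zero with a one-dimensional kernel forces the range to have codimension one, so the range equals this subspace; no Fourier series needs to converge in Hölder norm.
\end{itemize}

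Transversality is the step I expect to be most delicate, because the order of differentiation must be justified. $H$ is smooth jointly in $(R,\varphi)$, using the fixed domain $A_\rho$ and the pull-back described after \eqref{eq3.7}. So $D_RD_\varphi H(R,0)[\cos t]=\partial_R(L_R\cos t)$ as long as the formula $L_R\cos t=\Lambda_1(R,\gamma)\cos t$ holds for all $R$ near $R_*$ in a form compatible with that smooth structure. Theorem~\ref{thm3.3} gives it at each fixed $R$. I will check that $D_\varphi H(R,0)$, computed in the $A_\rho$ chart, agrees with $L_R$: the physical trace does not depend on the pull-back, and the displacement fields coincide at the inner boundary up to the factor $\rho/R$ in the normal direction, which is fixed by rescaling. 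Differentiating in $R$ then gives $\partial_R\Lambda_1(R_*,\gamma)\cos t$, which is smooth in $R$ by the explicit Bessel formula. Its first cosine coefficient is $\partial_R\Lambda_1(R_*,\gamma)\neq0$ by Theorem~\ref{thm3.4}, so it does not lie in $\operatorname{Range}L_{R_*}$.
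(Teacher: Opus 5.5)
Your proposal is correct and follows the paper's proof essentially step by step: you show $T_{R_*}$ is an isomorphism via the mixed problem on the quotient, treat $L_{R_*}$ as a compact perturbation of it, get the kernel from the cosine projections, and obtain transversality by differentiating $L_R\cos t=\Lambda_1(R,\gamma)\cos t$ in $R$. The one variation is that you place the range inside the hyperplane using $\widehat{(L_{R_*}\psi)}_1=\Lambda_1(R_*,\gamma)\widehat\psi_1=0$ rather than the paper's Green's-identity symmetry of $T_R$, which is equally valid; your remark about a factor $\rho/R$ is mistaken but harmless, since on $|x|=\rho$ the field $\eta_\rho x\psi$ is already $\theta\psi$, and chart independence of the physical trace alone gives $D_\varphi H(R,0)=L_R$.
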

\begin{proof}
We write $T_R:X\to Y$ for the inner Dirichlet-to-Neumann operator in \eqref{eq3.20}, namely $T_R\psi=\partial_\nu w_{\psi,R}(Re_1,\cdot)$. The Dirichlet Schauder estimate and the boundary trace estimate make $T_R$ bounded. We first show that it is an isomorphism. On the compact quotient $A_R/(2\pi\mathbb Z)$, consider
\[
\Delta w=0,\qquad w=0\ \text{on }|x|=1,\qquad
\partial_\nu w=h(t)\ \text{on }|x|=R.
\]
The outer Dirichlet condition removes the constant ambiguity and gives a coercive mixed boundary problem. Its two boundary components are disjoint and smooth, so the mixed boundary Schauder estimates yield a unique solution with
\[
\|w\|_{C^{2,\alpha}}\leq C_R\|h\|_{C^{1,\alpha}}
\quad(h\in Y);
\]
see \cite{Gilbarg}. Uniqueness makes this solution radial in $x$ and even in $t$. Its inner trace $\psi(t)=w(Re_1,t)$ belongs to $X$ and satisfies $T_R\psi=h$, with a bounded trace estimate. For injectivity, if $T_R\psi=0$, Green's identity for its harmonic extension gives $\int|\nabla w_{\psi,R}|^2=0$. The extension is constant and vanishes on the outer boundary, so it is zero and $\psi=0$. This proves that $T_R^{-1}:Y\to X$ is bounded.

By \eqref{eq3.19},
\[
L_R=Q_R^{\mathrm{cyl}}T_R-\kappa_R^{\mathrm{cyl}}J,
\]
where the inclusion $J:X\hookrightarrow Y$ is compact. At $R_*$, the scalar $Q_{R_*}^{\mathrm{cyl}}$ is positive, so $Q_{R_*}^{\mathrm{cyl}}T_{R_*}$ is an isomorphism. Equivalently,
\[
L_{R_*}=Q_{R_*}^{\mathrm{cyl}}T_{R_*}
\left(I-\frac{\kappa_{R_*}^{\mathrm{cyl}}}{Q_{R_*}^{\mathrm{cyl}}}
T_{R_*}^{-1}J\right).
\]
The operator in parentheses is the identity minus a compact operator on $X$. Therefore $L_{R_*}$ is Fredholm of index zero and its range is closed.

The cosine projection identities established above show that $L_{R_*}\psi=0$ forces
\[
\Lambda_k(R_*,\gamma)\widehat\psi_k=0
\quad\text{for every }k\geq0.
\]
By Theorem~\ref{thm3.4}, only $k=1$ has zero multiplier. Thus every coefficient other than $\widehat\psi_1$ vanishes. Completeness in $L^2$ gives $\psi=\widehat\psi_1\cos t$, and continuity gives the same identity pointwise. Conversely, $L_{R_*}\cos t=0$. Hence the kernel is exactly $\operatorname{span}\{\cos t\}$, and the index-zero property gives a range of codimension one.

To identify this range, we use Green's identity. For $\psi,\zeta\in X$, let $w_\psi$ and $w_\zeta$ be their harmonic extensions with zero outer boundary values. On the compact quotient,
\[
\begin{aligned}
\int_{A_R/(2\pi\mathbb Z)}\nabla w_\psi\cdot\nabla w_\zeta
&=\int_{\partial(A_R/(2\pi\mathbb Z))}w_\zeta\partial_\nu w_\psi\\
&=|\mathbb S^{n-1}|R^{n-1}
\int_0^{2\pi}\zeta(t)(T_R\psi)(t)\,dt.
\end{aligned}
\]
The outer boundary term vanishes. Interchanging $\psi$ and $\zeta$ leaves the volume integral unchanged; cancelling the common inner surface factor gives
\[
\int_0^{2\pi}(T_R\psi)\zeta\,dt
=\int_0^{2\pi}\psi(T_R\zeta)\,dt.
\]
The scalar multiplication term in $L_R$ has the same symmetry. Therefore $L_{R_*}\cos t=0$ implies
\[
\int_0^{2\pi}(L_{R_*}\psi)(t)\cos t\,dt=0
\quad\text{for every }\psi\in X.
\]
The hyperplane defined by this integral has codimension one in $Y$. Since the closed range of $L_{R_*}$ also has codimension one and is contained in that hyperplane, the two spaces coincide.

Finally, the Banach spaces $X$ and $Y$ use the same periodic variable for all $R$, and $H$ depends smoothly on $R$. We can differentiate the identity $L_R\cos t=\Lambda_1(R,\gamma)\cos t$ with respect to $R$, keeping $\gamma$ fixed. This gives
\[
D_R D_\varphi H(R_*,0)[\cos t]
=\partial_R\Lambda_1(R_*,\gamma)\cos t.
\]
Its pairing with $\cos t$ is
\[
\int_0^{2\pi}\partial_R\Lambda_1(R_*,\gamma)\cos^2t\,dt
=\pi\partial_R\Lambda_1(R_*,\gamma)\ne0.
\]
It therefore lies outside the range just identified. This proves transversality.
\end{proof}

\subsection{The Proof of Theorem \ref{thm1.2}}
\begin{proof}[Proof of Theorem~\ref{thm1.2}]
Fix $|\gamma|<\gamma_{\mathrm{cyl}}(n)$. Choose an open neighborhood $\mathcal U\subset(0,1)\times X$ of $(R_*,0)$ small enough that $0<R+\varphi(t)<1$ for every $(R,\varphi)\in\mathcal U$ and that the fixed-domain construction applies. The map $H:\mathcal U\to Y$ is smooth, and $H(R,0)=0$ for all $R$ sufficiently close to $R_*$. By Lemma~\ref{lem:cyl-fredholm}, its derivative in $\varphi$ has a one-dimensional kernel, a codimension-one range, and satisfies the transversality condition. The Crandall--Rabinowitz theorem \cite[Theorem 1.7]{Crandall} gives a smooth curve $(R_s,\varphi_s)$ of zeros with
\[
R_0=R_*,\qquad
\varphi_s=s\cos t+s\mu_s,\qquad
\mu_0=0,\qquad
\int_0^{2\pi}\mu_s(t)\cos t\,dt=0.
\]
Here we use the closed complement of $\operatorname{span}\{\cos t\}$ defined by this integral condition. Smoothness of the curve follows from smoothness of $H$ and the implicit function theorem in the usual simple-eigenvalue reduction.

After reducing $\varepsilon$, $0<R_s+\varphi_s<1$ and $Q_{R_s}^{\mathrm{cyl}}>0$. Returning from the fixed quotient to the physical periodic domain yields the solution of \eqref{e1.6}. For $s\ne0$,
\[
\int_0^{2\pi}\phi_s(t)\cos t\,dt=\pi s\ne0,
\]
so the inner radius is nonconstant. This proves the claimed nontriviality and completes the theorem.
\end{proof}

\bibliographystyle{amsplain}
\makeatletter
\def\@biblabel#1{#1.~}
\makeatother


\providecommand{\bysame}{\leavevmode\hbox to3em{\hrulefill}\thinspace}
\providecommand{\MR}{\relax\ifhmode\unskip\space\fi MR }
\providecommand{\MRhref}[2]{%
  \href{http://www.ams.org/mathscinet-getitem?mr=#1}{#2}
}
\providecommand{\href}[2]{#2}

\end{document}